\newcommand{\TITLE}{Prime Splitting and Common Index Divisors in Radical Extensions}
\newcommand{\TITLERUNNING}{Prime Splitting in Radical Extensions}
\theoremstyle{plain}
\newtheorem{theorem}{Theorem}
\newtheorem{proposition}[theorem]{Proposition}
\newtheorem{lemma}[theorem]{Lemma}
\newtheorem{corollary}[theorem]{Corollary}
\theoremstyle{definition}
\newtheorem{definition}[theorem]{Definition}
\theoremstyle{remark}
\newtheorem{remark}[theorem]{Remark}
\newtheorem{example}[theorem]{Example}
\numberwithin{theorem}{section}
\newcommand{\tightoverset}[2]{%
  \mathop{#2}\limits^{\vbox to -.5ex{\kern-1.05ex\hbox{$#1$}\vss}}}
\numberwithin{equation}{section} 
\newcommand{\gm}{{\mathfrak{m}}}
\newcommand{\gn}{{\mathfrak{n}}}
\newcommand{\gp}{{\mathfrak{p}}}
\newcommand{\gI}{{\mathfrak{I}}}
\newcommand{\gP}{{\mathfrak{P}}}
\def\Ocal{{\mathcal O}}
\def\Pcal{{\mathcal P}}
\def\pcal{\mathcal{p}}
\newcommand{\FF}{\mathbb{F}}
\newcommand{\QQ}{\mathbb{Q}}
\newcommand{\ZZ}{\mathbb{Z}}
\newcommand{\tensor}{\otimes}
\newcommand{\dnd}{\nmid}
\newcommand{\ol}[1]{\overline{#1}}
\newcommand{\Disc}{\operatorname{Disc}}
\newcommand{\Irr}{\operatorname{Irred}}
\newcommand{\red}{\operatorname{red}}
\newcommand{\LPO}[4]{#1\Ocal_{#2\left(\hspace{-.5ex}\sqrt[#3]{#4}\right)}}
\title[\TITLERUNNING]{\TITLE}
\author[Hanson Smith]{Hanson Smith}
\address{Department of Mathematics, California State University San Marcos,
333 S. Twin Oaks Valley Rd.
San Marcos, CA 92096
USA}
\email{hsmith@csusm.edu}
\keywords{Radical extension, Pure extension, Prime splitting, Prime ideal factorization, Monogenic}
\subjclass[2020]{11R04, 11R21, 11R27}
\begin{document}

\sloppy 


\baselineskip=17pt


\begin{abstract}
We explicitly describe the splitting of odd integral primes in the radical extension $\mathbb{Q}(\sqrt[n]{a})$, where $x^n-a$ is an irreducible polynomial in $\mathbb{Z}[x]$. Our motivation is to classify common index divisors, the primes whose splitting provides a local obstruction to the existence of a power integral basis for the ring of integers of $\mathbb{Q}(\sqrt[n]{a})$. Among other results, we show that if $p$ is such a prime, even or otherwise, then $p$ divides $n$. 
\end{abstract}


\maketitle




\section{Introduction and main results}

The following is a classic theorem of Dedekind that is based on work of Kummer.
\begin{theorem}[Dedekind--Kummer Factorization]\label{Thm: DedekindKummerFactorization} Let $f(x)\in \ZZ[x]$ be monic and irreducible, and let $K=\QQ(\alpha)$, where $\alpha$ is a root of $f(x)$. If $p\in \ZZ$ is a prime that does not divide $\big[\Ocal_K:\ZZ[\alpha]\big]$, then the factorization of $p\Ocal_K$ mirrors the factorization of $f(x)$ in $\FF_p[x]$. More specifically, if
\[\ol{f(x)} = \ol{\phi_1(x)}^{e_1}\cdots\ol{\phi_r(x)}^{e_r}\]
is a factorization into irreducibles in $\FF_p[x]$ with the overbar indicating reduction modulo $p$, then the prime ideal factorization of $p\Ocal_K$ is
\[p\Ocal_K=\gp_1^{e_1}\cdots \gp_r^{e_r},\] 
where $\gp_i=(\phi_i(\alpha),p)$ and the residue class degree of $\gp_i$ is equal to the degree of $\phi_i(x)$.
\end{theorem}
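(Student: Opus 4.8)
The plan is to pass from prime ideals to a finite quotient ring: understanding the factorization of $(p)$ in $\Ocal_K$ is the same as understanding $\Ocal_K/p\Ocal_K$ as a product of local Artinian rings, and the hypothesis $p\nmid[\Ocal_K:\ZZ[\alpha]]$ is precisely what lets us compute this quotient inside $\ZZ[\alpha]$ instead.

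First I would make the reduction rigorous. The inclusion $\ZZ[\alpha]\subseteq\Ocal_K$ has finite cokernel $C$, a finite abelian group of order $m=[\Ocal_K:\ZZ[\alpha]]$. Applying $-\otimes_\ZZ\ZZ/p\ZZ$ to $0\to\ZZ[\alpha]\to\Ocal_K\to C\to 0$ and using that multiplication by $p$ is an automorphism of $C$ when $p\nmid m$ (so that $C/pC=0$ and $\operatorname{Tor}_1^{\ZZ}(C,\ZZ/p\ZZ)=0$), one obtains a ring isomorphism $\ZZ[\alpha]/p\ZZ[\alpha]\xrightarrow{\,\sim\,}\Ocal_K/p\Ocal_K$. (Equivalently, $\ZZ[\alpha]$ and $\Ocal_K$ have the same localization at $p$.) Since $f$ is monic we also have $\ZZ[\alpha]\cong\ZZ[x]/(f(x))$, so reducing modulo $p$,
\[\Ocal_K/p\Ocal_K\;\cong\;\FF_p[x]/\big(\ol{f(x)}\big)\;\cong\;\prod_{i=1}^r\FF_p[x]/\big(\ol{\phi_i(x)}^{\,e_i}\big),\]
the last step being the Chinese Remainder Theorem, valid because the $\ol{\phi_i(x)}$ are pairwise coprime in $\FF_p[x]$. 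Each factor on the right is local Artinian, with maximal ideal generated by $\ol{\phi_i(x)}$, nilpotency index $e_i$, and residue field $\FF_p[x]/\big(\ol{\phi_i(x)}\big)$ of degree $\degr\phi_i$ over $\FF_p$.

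Next I would run the same decomposition on the arithmetic side. Writing the a priori unknown factorization $(p)=\gp_1^{f_1}\cdots\gp_s^{f_s}$ with the $\gp_j$ distinct primes, the Chinese Remainder Theorem again gives $\Ocal_K/p\Ocal_K\cong\prod_{j=1}^s\Ocal_K/\gp_j^{f_j}$, a product of local Artinian rings with nilpotency indices $f_j$ and residue fields $\Ocal_K/\gp_j$ whose $\FF_p$-dimensions are the residue class degrees. A decomposition of a finite commutative ring as a product of local rings is unique up to permutation — the factors are cut out by the primitive idempotents — so comparing with the previous display forces $s=r$ and, after relabeling, $e_i=f_i$ together with $[\Ocal_K/\gp_i:\FF_p]=\degr\phi_i$. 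This is exactly the asserted shape $(p)=\gp_1^{e_1}\cdots\gp_r^{e_r}$ with residue degrees $\degr\phi_i$. Finally, tracing through $\Ocal_K\twoheadrightarrow\Ocal_K/p\Ocal_K\cong\prod_i\FF_p[x]/\big(\ol{\phi_i(x)}^{e_i}\big)$, the prime $\gp_i$ is the preimage of the unique maximal ideal $\big(\ol{\phi_i(x)}\big)$ in the $i$-th coordinate, which pulls back to the ideal $(p,\phi_i(\alpha))$, identifying $\gp_i=(\phi_i(\alpha),p)$.

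The one real obstacle is the isomorphism $\ZZ[\alpha]/p\ZZ[\alpha]\cong\Ocal_K/p\Ocal_K$ in the second step: this is the sole place the coprimality hypothesis is used, and it genuinely requires the small homological (or localization) argument above rather than being formal — without $p\nmid m$ the natural map can fail to be injective or surjective. Everything else is the Chinese Remainder Theorem together with uniqueness of the idempotent decomposition.
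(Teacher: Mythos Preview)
Your proof is correct and complete. The paper, however, does not actually prove this theorem: it is stated in the introduction as a classical result attributed to Dedekind and Kummer and then used as a tool throughout, with no proof supplied. So there is no ``paper's own proof'' to compare against.

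For what it is worth, the argument you give is the standard modern one and is entirely sound. The key step---that $\ZZ[\alpha]/p\ZZ[\alpha]\to\Ocal_K/p\Ocal_K$ is an isomorphism exactly when $p\nmid[\Ocal_K:\ZZ[\alpha]]$---is handled cleanly via the Tor computation (or, as you note, localization at $p$), and the remainder is indeed just two applications of the Chinese Remainder Theorem together with the uniqueness of the decomposition of an Artinian ring into local factors. The identification $\gp_i=(p,\phi_i(\alpha))$ by pulling back the maximal ideal in the $i$-th coordinate is also correct. Nothing is missing.
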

Given a polynomial generating a number field $K$, Theorem \ref{Thm: DedekindKummerFactorization} gives a convenient way to compute the factorization of all but finitely many integral primes in the ring of integers~$\Ocal_K$. As seen above, we say an ideal $\mathfrak{I}$ \textit{mirrors} the factorization of a polynomial $f(x)$ if, given a factorization into irreducibles $f(x)=\prod_{i=1}^r \phi_i(x)^{e_i}$, there is a prime ideal factorization $\mathfrak{I}=\prod_{i=1}^r\gp_i^{e_i}$ where the residue class degree of $\gp_i$ is equal to the degree of $\phi_i(x)$.

This paper is focused on extensions generated by an irreducible polynomial of the shape $f(x)=x^n-a$. We call these \textit{radical extensions}\footnote{Radical extensions are also called \textit{pure extensions} or \textit{root extensions} in the literature.}, and we let $\sqrt[n]{a}$ denote an arbitrary root. The discriminant of the radical polynomial $f(x)$ is $\Disc(f)=\pm n^na^{n-1}$. Dedekind--Kummer factorization and the formula 
\[\Disc(f)=\Disc\Big(\QQ\big(\hspace{-.5ex} \sqrt[n]{a}\big)\Big)\cdot \Big[\Ocal_{\QQ(\hspace{-.5ex} \sqrt[n]{a})}:\ZZ\left[\sqrt[n]{a}\right]\Big]^2\] show that one can find the factorization of a prime $p\dnd na$ in the ring of integers $\Ocal_{\QQ(\hspace{-.5ex} \sqrt[n]{a})}$ by simply factoring $x^n-a$ in $\FF_p[x]$. The goal of this paper is to provide an explicit description of the factorization of the odd primes dividing $na$ and to use that description to classify the local obstructions to the monogenicity of $\QQ\big(\hspace{-.5ex} \sqrt[n]{a}\big)$. The explicit description, our main theorem, is the following:


\begin{theorem}\label{Thm: Main}
Let $p$ be an odd\footnote{The splitting of $p=2$ will be described in forthcoming work of the author and CSUSM master's student Dylan Scofield.} prime dividing $na$. The following three cases describe the factorization of the ideal generated by $p$ in the ring of integers of $\QQ\big(\hspace{-.5ex} \sqrt[n]{a}\big)$: 
\begin{enumerate}[label=\Roman*., ref=\Roman*]
\item If $p\mid a$ and either $p\nmid n$ or $p\mid n$ but $p\nmid v_p(a)$, we factor $y^{\gcd(v_p(a),n)}-a/p^{v_p(a)}$ into irreducibles in $\FF_p[y]$:
\[y^{\gcd(v_p(a),n)}-\frac{a}{p^{v_p(a)}}=\gamma_1(y)\cdots\gamma_r(y).\]
Then we have the prime ideal factorization
\[p\Ocal_{\QQ\left(\hspace{-.5ex}\sqrt[ n]{a}\right)} = \gp_1^{n / \gcd(v_p(a),n)}\cdots \gp_r^{n / \gcd(v_p(a),n)},\]
where each $\gp_i$ has residue class degree equal to the degree of $\gamma_i(y)$. \label{MainI}

\item If $p\mid n$ and $p\nmid a$, we define $w=v_p\big(a^{p}-a\big)$ and $n=n_0p^{m}$ where $m= v_p(n)$. We factor $x^{n_0}-a$ into irreducibles in $\FF_p[x]$:
\[x^{n_0}-a= \phi_1(x)\cdots \phi_r(x).\]
If we write $b = \min(w-1,m)$, then we have the prime ideal factorization
\[\LPO{p}{\QQ}{n}{a} = \prod_{i=1}^{r}\left( \gp_{i}^{p^{m- b}}\prod_{j=m- b+1}^{m}\gp_{i,j}^{\varphi(p^{j})} \right),\]
where $\varphi$ is Euler's phi function and each $\gp_i$ and $\gp_{i,j}$ has residue class degree equal to the degree of $\phi_i(x)$. If $w=1$, then the product is empty and taken to be 1. \label{MainII} 

\item If $p$ divides $n$, $a$, and $v_p(a)$, write $a=a_0p^{hp^k}$ where $\gcd(a_0,p)= 1$. Let $w_0=v_p\big(a_0^p-a_0\big)$, $c=\min(w_0-1,k,m)$, $g_0=\gcd(n_0,h)$, and $g=\gcd\big(n_0,h(p-1)\big)$. Then we have the factorization
\[\LPO{p}{\QQ}{n}{a} = \gI_0^{\frac{p^{m-c} n_0}{g_0}}\prod_{i=1}^c\gI_i^{\frac{p^{m-c}\varphi(p^i) n_0 }{g}},\]
where the factorization of $\gI_0$ mirrors the factorization of $R_{S_0}(y) = y^{g_0}-a_0$ in $\FF_p[y]$ and that of each $\gI_i$ with $i>0$ mirrors $R_S(y) = y^{g}-(-1)^{hp^k}a_0$ in $\FF_p[y]$. \label{MainIII}
\end{enumerate}
\end{theorem}


The proof of Theorem \ref{Thm: Main} proceeds via a variety of techniques, most prominently a $p$-adic factorization algorithm initialized by Ore and completed by Montes. 
Our methods give a precise description of the shape of the prime ideal factorization, but they are not sufficient to provide generators for the prime ideals as in Theorem \ref{Thm: DedekindKummerFactorization}. In some cases this is possible. For example, in case \ref{MainI} if $\gcd(v_p(a),n)=1$, then $p$ is totally ramified, and one can show $p\Ocal_{\QQ(\hspace{-.5ex} \sqrt[n]{a})}=\big(p,\sqrt[n]{a}\big)^n$. However, we have been unable to reverse through the $p$-adic approximations of the radical polynomial's factors provided by the Montes algorithm in a sufficiently generic way to construct generators in all cases.


The explicit description of splitting in Theorem \ref{Thm: Main} gives us a tool to classify the ``local" obstructions to monogenicity.

\begin{definition}\label{Def: commonindexdivisor}
Let $K/\QQ$ be a number field and write $\Ocal_K$ for the ring of integers. An integral prime $p$
is a \textit{common index divisor}\footnote{Common index divisors are also called \textit{essential discriminant divisors} and \textit{inessential or nonessential discriminant divisors}. The shortcomings of the English nomenclature are partly due to what Neukirch \cite[page 207]{Neukirch} calls ``the untranslatable German catch phrase [...] \textit{au{\ss}erwesentliche Diskriminantenteile}." 
See the final pages of Keith Conrad's exposition \href{https://kconrad.math.uconn.edu/blurbs/gradnumthy/dedekind-index-thm.pdf}{\textit{Dedekind's Index Theorem}} for a detailed explanation of the seemingly contradictory nomenclature.} for the extension $K/\QQ$ if 
\[p \text{ divides } \big[\Ocal_K:\ZZ[\alpha]\big]  \text{ for each }  \alpha\in\Ocal_K  \text{ with }  \QQ(\alpha)=K .\] 
\end{definition}

\begin{remark}\label{Rmk: LocalGlobalGlobal}
A common index divisor $p$ is a local obstruction to monogenicity in the sense that $\Ocal_K\tensor_\ZZ \ZZ_p\not\cong \ZZ[x]/f(x)\tensor_\ZZ \ZZ_p$ for any $f(x)\in\ZZ[x]$. See Example \ref{Ex: 5*7^2} for a cubic extension that is locally monogenic at each prime $p$ but not globally monogenic. Note that \cite{AlpogeBhargavaShnidman} consider both common index divisors and extensions as in Example \ref{Ex: 5*7^2} local obstructions since in both cases the equation formed by setting the index form equal to $\pm 1$ admits no solutions. 
\end{remark}

Hensel \cite{Hensel1894} connected common index divisors with prime splitting:
\begin{theorem}\label{Thm: HenselCIDs} The integral prime $p$ is a common index divisor of the number field $K$ if and only if there is a positive integer $f$ such that the number of prime ideal factors of $p\Ocal_K$ with residue class degree $f$ is greater than the number of monic irreducibles of degree $f$ in $\FF_p[x]$.
\end{theorem}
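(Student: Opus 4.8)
The plan is to prove the equivalent statement obtained by negating both sides: $p$ is \emph{not} a common index divisor of $K$ if and only if, for every positive integer $f$, the number of prime ideal factors of $p\Ocal_K$ with residue class degree $f$ is at most the number of monic irreducible polynomials of degree $f$ in $\FF_p[x]$. The first step is to recast ``$p$ is not a common index divisor'' as a statement about the finite $\FF_p$-algebra $A := \Ocal_K/p\Ocal_K$. Fix $\alpha \in \Ocal_K$ with $\QQ(\alpha) = K$; then $\ZZ[\alpha]$ and $\Ocal_K$ are free $\ZZ$-modules of rank $n := [K:\QQ]$, and the natural ring map $\ZZ[\alpha]/p\ZZ[\alpha] \to A$ has image the $\FF_p$-subalgebra $\FF_p[\bar\alpha]$. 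As source and target are $n$-dimensional over $\FF_p$, this map is an isomorphism exactly when $\bar\alpha$ generates $A$, and --- tensoring $0 \to \ZZ[\alpha] \to \Ocal_K \to \Ocal_K/\ZZ[\alpha] \to 0$ with $\FF_p$ --- exactly when $p \nmid [\Ocal_K:\ZZ[\alpha]]$. Conversely, if $\bar\theta$ generates $A$ for some $\theta \in \Ocal_K$, then $\dim_{\FF_p}\FF_p[\bar\theta] = n$ forces the minimal polynomial of $\theta$ over $\QQ$ to have degree $n$ (its reduction mod $p$ is annihilated by $\bar\theta$), so $\QQ(\theta)=K$ automatically. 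Therefore $p$ is not a common index divisor if and only if $A$ is generated by a single element as an $\FF_p$-algebra.

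The second step is to compute the structure of $A$. Writing $p\Ocal_K = \gp_1^{e_1}\cdots\gp_g^{e_g}$ with $\gp_i$ of residue class degree $f_i$, the Chinese Remainder Theorem gives $A \cong \prod_{i=1}^{g} \Ocal_K/\gp_i^{e_i}$, and each $R_i := \Ocal_K/\gp_i^{e_i}$ is a finite local $\FF_p$-algebra of dimension $e_i f_i$, with residue field $\FF_{p^{f_i}}$ and maximal ideal that is principal (it is a quotient of the discrete valuation ring $\Ocal_{K,\gp_i}$) and nilpotent of exponent $e_i$. The key structural fact I need is that such an $R_i$ is isomorphic as an $\FF_p$-algebra to $\FF_p[x]/\bigl(\psi(x)^{e_i}\bigr)$ for \emph{every} monic irreducible $\psi \in \FF_p[x]$ of degree $f_i$. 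To prove it, lift a root of $\psi$ in the residue field of $R_i$ to an element of $R_i$ annihilated by $\psi$ --- Hensel's lemma applies since the maximal ideal is nilpotent --- obtaining a coefficient field $\FF_{p^{f_i}} \hookrightarrow R_i$; then $R_i \cong \FF_{p^{f_i}}[t]/(t^{e_i})$ by sending $t$ to a uniformizer and counting dimensions; finally the $\FF_p$-algebra map $x \mapsto \gamma + t$ (with $\gamma$ a root of $\psi$) identifies $\FF_p[x]/(\psi^{e_i})$ with $\FF_{p^{f_i}}[t]/(t^{e_i})$, because separability of $\psi$ makes $\psi(\gamma + t)$ a generator of the ideal $(t)$.

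The final step assembles the equivalence. If, for every $f$, there are at least as many monic irreducibles of degree $f$ in $\FF_p[x]$ as there are indices $i$ with $f_i = f$, pick \emph{distinct} monic irreducibles $\psi_1,\dots,\psi_g$ with $\deg \psi_i = f_i$; then $A \cong \prod_i \FF_p[x]/(\psi_i^{e_i}) \cong \FF_p[x]/\bigl(\prod_i \psi_i^{e_i}\bigr)$ by the Chinese Remainder Theorem, so $A$ is monogenic and $p$ is not a common index divisor. Conversely, if $A \cong \FF_p[x]/(\bar f)$ is monogenic, factor $\bar f = \bar\phi_1^{m_1}\cdots\bar\phi_s^{m_s}$ into distinct irreducibles; comparing $A \cong \prod_j \FF_p[x]/(\bar\phi_j^{m_j})$ with $A \cong \prod_i R_i$ and invoking the uniqueness of the decomposition of an Artinian ring into local factors (its localizations at maximal ideals), the residue fields must match, so the multiset $\{\deg\bar\phi_j\}$ equals $\{f_i\}$; since the $\bar\phi_j$ are distinct, $\#\{i : f_i = f\}$ is at most the number of monic irreducibles of degree $f$ for every $f$. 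Taking contrapositives recovers Hensel's criterion. The main obstacle is the structural fact in the second step --- identifying exactly which $\FF_p$-algebras occur as $\Ocal_K/\gp^e$ and, above all, establishing the freedom to prescribe the underlying irreducible polynomial; the remaining ingredients are the Chinese Remainder Theorem, a dimension count, and the uniqueness of the local decomposition of an Artinian ring.
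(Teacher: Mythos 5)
The paper does not prove this statement: it is quoted as a classical theorem with a citation to Hensel \cite{Hensel1894}, so there is no in-paper argument to compare yours against. Your proof is correct and self-contained. The reduction of ``$p$ is not a common index divisor'' to ``$A=\Ocal_K/p\Ocal_K$ is monogenic as an $\FF_p$-algebra'' is handled carefully in both directions; in particular you verify that a generator $\bar\theta$ of $A$ automatically satisfies $\QQ(\theta)=K$, a point that is often skipped. The heart of the matter --- that $\Ocal_K/\gp^e$ is isomorphic to $\FF_p[x]/(\psi^e)$ for \emph{any} monic irreducible $\psi$ of degree $f(\gp)$, which is what gives the freedom to choose distinct irreducibles when enough of them exist --- is exactly the right structural fact, and your proof of it is sound: Hensel lifting against the nilpotent maximal ideal produces the coefficient field, $\psi(\gamma+t)=\psi'(\gamma)t+O(t^2)$ with $\psi'(\gamma)\neq 0$ shows that $\psi(\gamma+t)$ generates the maximal ideal of $\FF_{p^{f}}[t]/(t^{e})$, and surjectivity of $x\mapsto\gamma+t$ plus the dimension count $ef$ on both sides closes the isomorphism. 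The converse direction correctly appeals to the uniqueness of the decomposition of an Artinian ring into local factors to match residue fields, so that distinctness of the $\bar\phi_j$ yields the bound $\#\{i: f_i=f\}\leq\Irr(f,p)$. This is the standard modern proof of Hensel's criterion, and it would serve as a legitimate replacement for the citation.
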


Gauss's formula for the number of monic irreducible polynomials of degree $f$ over $\FF_p$ is
\[\Irr(f,p)\coloneqq \frac{1}{f}\sum_{d\mid f}\mu\left(\frac{f}{d}\right)p^d, \text{ where } \mu \text{ is the M\"obius function.}\]

In the short note \cite{Zylinski}, von {\.Z}yli{\'n}ski uses Gauss's formula to prove that if $p$ is a common index divisor of a number field of degree $n$, then $p<n$.

As a consequence of our main theorem, we are able to classify odd common index divisors (CIDs) of $\QQ\big(\hspace{-.5ex} \sqrt[n]{a}\big)$. First, we state a simpler corollary of Theorem \ref{Thm: pmidanotn} that holds for all potential CIDs including $p=2$.

\begin{corollary}\label{Cor: quickCID}
Let $p$ be an integral prime that is not necessarily odd. If $p$ is a common index divisor of $\QQ\big(\hspace{-.5ex} \sqrt[n]{a}\big)$, then $p\mid n$. 
\end{corollary}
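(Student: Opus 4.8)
The plan is to prove the contrapositive: if $p \nmid n$, then $p$ is not a common index divisor of $\QQ(\sqrt[n]{a})$. By Hensel's criterion (Theorem \ref{Thm: HenselCIDs}), it suffices to show that for every residue class degree $f$, the number of prime ideal factors of $p\Ocal_K$ with residue degree $f$ is at most $\Irr(f,p)$, the number of monic irreducibles of degree $f$ over $\FF_p$. I would split into the case $p \nmid a$ and the case $p \mid a$.

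When $p \nmid a$ (so $p \nmid na$), Dedekind--Kummer factorization (Theorem \ref{Thm: DedekindKummerFactorization}) applies directly, since $p \nmid \big[\Ocal_K:\ZZ[\sqrt[n]{a}]\big]$ by the discriminant argument recalled in the introduction: the factorization of $p$ matches the factorization of $x^n - a$ in $\FF_p[x]$. Hence the prime ideal factors of $p$ of residue degree $f$ correspond bijectively to distinct monic irreducible factors of degree $f$ in the squarefree-or-not factorization of $\overline{x^n - a}$, and the count of such is trivially at most the total number $\Irr(f,p)$ of monic irreducibles of degree $f$.

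When $p \mid a$ but $p \nmid n$, I would invoke the first bullet of Theorem \ref{Thm: Main} (equivalently the specialization of Theorem \ref{Thm: pmidanotn} to $\QQ$): setting $v = v_p(a)$ and $d = \gcd(v,n)$, the prime $p$ factors as $\gp_1^{n/d}\cdots \gp_r^{n/d}$ where the $\gp_i$ have residue degrees equal to the degrees of the irreducible factors $\gamma_i(y)$ of $y^{d} - a/p^{v}$ in $\FF_p[y]$. Since $y^{d} - a/p^{v}$ is a single polynomial of degree $d$, its distinct irreducible factors of any fixed degree $f$ number at most $\Irr(f,p)$; thus the number of $\gp_i$ with residue degree $f$ is at most $\Irr(f,p)$, and Hensel's criterion fails to produce an obstruction. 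Combining the two cases gives that no prime $p \nmid n$ can be a common index divisor, which is the contrapositive of the claim.

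The only subtlety — and the step I would be most careful about — is making sure the residue-degree multiplicities are bounded correctly when the relevant $\FF_p$-polynomial has repeated irreducible factors: a repeated factor still contributes only one prime ideal $\gp_i$ of that residue degree (with elevated ramification, absorbed into the exponent $n/d$), so the count of primes of each residue degree is bounded by the number of \emph{distinct} monic irreducibles of that degree, which is at most the total count $\Irr(f,p)$. Since in both cases the splitting is governed by a single polynomial in $\FF_p[x]$ or $\FF_p[y]$, there is never a "doubling up" of residue degrees across independent factors, so Hensel's inequality can never be violated. This is exactly the mechanism that $p \mid n$ is needed for the richer factorization patterns in the second and third bullets of Theorem \ref{Thm: Main}.
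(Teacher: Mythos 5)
Your proposal is correct and follows essentially the same route as the paper: the paper likewise disposes of $p\nmid na$ via Dedekind--Kummer and of $p\mid a$, $p\nmid n$ via Theorem \ref{Thm: pmidanotn}, observing that in both cases the splitting of $p$ mirrors the factorization of a single polynomial over $\FF_p$, so Hensel's criterion (Theorem \ref{Thm: HenselCIDs}) cannot be triggered. Your worry about repeated irreducible factors is moot in the second case, since the paper notes the residual polynomial $y^{\gcd(v_p(a),n)}-a/p^{v_p(a)}$ is separable precisely because $p\nmid\gcd(v_p(a),n)$, but your fallback argument via distinct factors is sound anyway.
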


Combining Corollary \ref{Cor: quickCID} with von {\.Z}yli{\'n}ski's fact that a common index divisor is less than the degree, we have the following:

\begin{corollary}\label{Cor: NoPrimeDegCID}
If $n$ is prime, then $\QQ\big(\hspace{-.5ex} \sqrt[n]{a}\big)$ has no common index divisors.
\end{corollary}

The following is a full classification of odd common index divisors of $\QQ\big(\hspace{-.5ex} \sqrt[n]{a}\big)$ in terms of factorizations and counts of irreducible polynomials in $\FF_p[x]$.

\begin{corollary}\label{Cor: CIDs}
Let $p$ be an odd integral prime. Keep the notation of Theorem \ref{Thm: Main}.
\begin{enumerate}[label=\roman*., ref=\roman*]
\item If $p\mid a$ and either $p\dnd n$ or $p\mid n$ and $p\nmid v_p(a)$, then $p$ is not a common index divisor of $\QQ\big(\hspace{-.5ex} \sqrt[n]{a}\big)$. \label{CorCIDsi}

\item If $p\mid n$ and $p\nmid a$, then let $d_f$ be the number of irreducible factors of degree $f$ in the factorization of $x^{n_0}-a$ into irreducibles in $\FF_p[x]$.
The prime $p$ is a common index divisor of $\QQ\big(\hspace{-.5ex} \sqrt[n]{a}\big)$ if and only if 
\[\min(w,m+1) \cdot d_f>\Irr(f,p) \text{ for some positive integer }f.\] \label{CorCIDsii}

\item If $p$ divides $n$, $a$, and $v_p(a)$, then let $d_{f,0}$ be the number of irreducible factors of degree $f$ in the factorization of $y^{\gcd(n_0,h)}-a_0\in \FF_p[y]$ and let $d_f$ be the number of irreducible factors of degree $f$ in the factorization of $y^{\gcd(n_0,h(p-1))}-(-1)^{h}a_0 \in \FF_p[y].$
The prime $p$ is a common index divisor of $\QQ\big(\hspace{-.5ex} \sqrt[n]{a}\big)$ if and only if 
\[d_{f,0}+\min(w_0-1,k,m) \cdot d_f > \Irr(f,p) \text{ for some positive integer } f.\]\label{CorCIDsiii}
\end{enumerate}
\end{corollary}

\begin{proof}
First, we note that a common index divisor must divide $na$ since the factorization of other primes mirrors the factorization of a polynomial in $\FF_p[x]$.\footnote{This is seen more readily from analyzing the discriminant of $x^n-a$; however, we wanted to emphasize the polynomial factorization perspective.} In other words, if the splitting of $p$ coincides with the splitting of a polynomial in $\FF_p[x]$, then Theorem \ref{Thm: HenselCIDs} shows $p$ cannot be a common index divisor because its splitting is described by irreducibles in $\FF_p[x]$ of the requisite degree.  

Likewise, if $p\mid na$ but satisfies either of the conditions in \ref{CorCIDsi}, then Theorem~\ref{Thm: Main} case \ref{MainI} shows that the splitting of $p$ coincides with the splitting of a polynomial in $\FF_p[x]$. There are additional ramification indices; however, since Theorem \ref{Thm: HenselCIDs} is only concerned with distinct factors and residue class degrees, $p$ is not a common index divisor. Cases \ref{CorCIDsii} and \ref{CorCIDsiii} come from applying Theorem \ref{Thm: HenselCIDs} and Gauss's formula to the splittings given in Theorem \ref{Thm: Main} cases \ref{MainII} and \ref{MainIII}. 
\end{proof}


As the following example demonstrates, Corollary \ref{Cor: CIDs} allows us to construct number fields with a specified common index divisor. 

\begin{example}\label{Ex. SpecificCID}
If $p$ is an odd prime, we can construct a radical extension with $p$ as a common index divisor as follows. Let $n=p^p$ and $a=1+p^{p+1}$. Since $w=p+1$ and $m=p$, we have $b=\min(w-1,m)=p$. Further, $n_0=1$, so $x^{n_0}-a$ is linear, and $r=1$. Thus, Theorem~\ref{Thm: Main} case \ref{MainII} shows $p\Ocal_{\QQ(\hspace{-.5ex} \sqrt[n]{a})}$ splits into $p+1$ distinct primes, each having residue class degree~1. As summarized by Corollary \ref{Cor: CIDs} case \ref{CorCIDsii}, there are only $p$ distinct linear polynomials in $\FF_p[x]$, so the splitting of $p$ cannot be obtained from the splitting of an irreducible polynomial in $\FF_p[x]$.  Thus $p$ is a common index divisor of $\QQ\big(\hspace{-.5ex} \sqrt[n]{a}\big)/\QQ$. Indeed, \[\min(w,m+1)\cdot d_1=(p+1)\cdot 1>\Irr(1,p)=p.\] 
\end{example}

See Examples \ref{Ex: x^27-80} and \ref{Ex: 729} for a similar but more explicit constructions. Theorem \ref{Thm: HenselCIDs} shows that it is splitting (in particular, distinct prime ideal factors of same degree) as opposed to ramification that forces an integral prime to become a common index divisor. Theorem \ref{Thm: Main} and Corollary \ref{Cor: CIDs} together show that potential common index divisors in a radical extension experience a good deal of ramification. For this reason, a relatively large degree radical extension is needed for a given common index divisor to occur. The following example demonstrates, for a given integral prime $p$, a construction of an extension of minimal degree where $p$ is a common index divisor. Using \cite{Pleasants}, we can generalize this example to construct rings of integers of minimal degree requiring an arbitrary number of ring generators. This will be detailed in forthcoming work.

\begin{example}\label{Ex. LowDegCIDs}
    Fix a prime $p$. Letting $\ell$ be any prime not equal to $p$, the extension given by a root of
    \[x^{p+1}+\ell p x^{p}+\ell p^3x^{p-1}+\cdots + \ell p^{\frac{i(i+1)}{2}}x^{p-i+1}+\cdots \ell p^{p(p+1)/2}x+\ell p^{(p+1)(p+2)/2}\] has $p$ as a common index divisor. Indeed the polynomial is $\ell$-Eisenstein hence irreducible. Applying the Montes algorithm, the principal $x$-polygon has $p+1$ sides, each with length 1 and slopes descending from $p+1$ to 1. See Figure \ref{Fig. p+1sides}.


\begin{figure}[h!]
\begin{tikzpicture}
      \draw[<->] (0,5.5) -- (0,0) -- (4.5,0);
      \draw [fill] (0,5) circle [radius = .05];

      \node [left] at (-.2,5) {$\big(0,v_3(2\cdot 3^{10})\big)$};
      
      \draw [fill] (1,3) circle [radius = .05];
      \node [right] at (.95,3.2) {$\big(1,v_3(2\cdot 3^6)\big)$};
	  \draw [fill] (2,1.5) circle [radius = .05]; 
	  \node [right] at (1.95,1.9) {$\big(2,v_3(2\cdot 3^3)\big)$};     
      \draw [fill] (3,.5) circle [radius = .05];
      \node [right] at (2.9,.9) {$\big(3,v_3(2\cdot 3)\big)$};   
      \draw [fill] (4,0) circle [radius = .05];
      \node [above] at (4.4,-.75) {$\big(0,v_3(1)\big)$};
      \foreach \x in  {1,2,3,4} {
      		\draw (\x, 2pt) -- +(0,-4pt);
      }
      \foreach \y in  {.5,1,1.5,2,2.5,3,3.5,4,4.5,5} {
      		\draw (2pt, \y) -- +(-4pt, 0);
      }
      
      \draw (0,5) -- (1,3);
      \draw (1,3) -- (2,1.5);
      \draw (2,1.5) -- (3,.5);
      \draw (3,.5) -- (4,0);
	\end{tikzpicture}
  \caption{Principal $x$-polygon for $x^4-2\cdot 3 x^3-2\cdot 3^3 x^2-2\cdot 3^6 x - 2\cdot 3^{10}$}
  \label{Fig. p+1sides}
\end{figure}

\end{example}


One can ask about the power of a common index divisor dividing the index of each monogenic order. Ore conjectured \cite{Ore} and Engstrom proved \cite{Engstrom} that the power of a common index divisor is not determined by the prime ideal decomposition. See \cite{Nart} and \cite{DelCorsoDvornicich} for more recent developments.

\begin{example}\label{Ex: 5*7^2}
Common index divisors are local obstructions to monogenicity; however, there are also global obstructions. Consider the well-known example of $f(x) = x^3-5\cdot 7^2$. Corollary \ref{Cor: CIDs} and the fact that $2\dnd \Disc f(x)$ shows that there are no common index divisors of the extension $\QQ\big(\hspace{-.5ex} \sqrt[3]{5\cdot 7^2}\big)/\QQ$. Indeed, since $\big\{1,\sqrt[3]{5\cdot 7^2}, \sqrt[3]{5^2\cdot  7}\big\}$ is an integral basis, we see 
\[\bigg[\Ocal_{\QQ\left(\hspace{-.5ex} \sqrt[3]{5\cdot 7^2}\right)}:\ZZ\left[\sqrt[3]{5\cdot 7^2}\right]\bigg]=7 \ \text{ and } \ \bigg[\Ocal_{\QQ\left(\hspace{-.5ex} \sqrt[3]{5\cdot 7^2}\right)}:\ZZ\left[\sqrt[3]{5^2\cdot 7}\right]\bigg]=5. \]

The index form, which is $\pm 1$ if and only if $X\sqrt[3]{5\cdot 7^2}+ Y\sqrt[3]{5^2\cdot  7}$ is a monogenerator, is $5Y^3-7X^3$. The equation $5Y^3-7X^3=\pm 1$ has no integer solutions, so $\QQ\big(\hspace{-.5ex} \sqrt[3]{5\cdot 7^2}\big)$ is not monogenic despite being locally monogenic (for each $p$, there is a power $\ZZ_p$-integral basis) at each prime $p$. The interested reader should consult \cite{GaalsBook} and \cite{EvertseGyoryBook} for more on index form equations and \cite{ABHS} for an algebro-geometric perspective. Note, the prime 7 is a local obstruction in the sense of \cite{AlpogeBhargavaShnidman} as $5Y^3-7X^3=\pm 1$ admits no solutions over $\QQ_7$. 

Zack Wolske has brought it to the author's attention that the example $\QQ\big(\hspace{-.5ex} \sqrt[3]{23\cdot 15^2}\big)$ from \cite{SpearmanYangYoo} has a global obstruction in the sense of \cite{AlpogeBhargavaShnidman}. Using Magma, \cite{SpearmanYangYoo} shows the index form $23X^3-15Y^3=\pm 1$ has no integer solutions. However, the rational solution $(\frac12,\frac12)$ and Hensel's lemma applied to $23X^3=1$ for $p=2$ show there are $\ZZ_p$ solutions for each $p$. 
\end{example}


\subsection{The proof of Theorem \ref{Thm: Main}}

The following list provides a proof of Theorem \ref{Thm: Main} from a bird's-eye view; the difficulty is in establishing the results referenced. When not too cumbersome, we prove more general results than are needed to obtain Theorem \ref{Thm: Main} since they may be of interest to future authors.  

\begin{enumerate}
\item[1.] Case \ref{MainI} is established by specializing Theorem \ref{Thm: pmidanotn} to $\QQ$ and taking $p$ as our uniformizer.

\item[2.] Case \ref{MainII} is Corollary \ref{Cor: psplittinginnQ}, a specialization of Proposition \ref{Prop: pdividingnandnotageneral} to $\QQ$.

\item[3.] Case \ref{MainIII} is Proposition \ref{Prop: SplittingnotExplicit} with $K=\QQ$ in addition to some explicit work on uniformizers as well as a simplification of the relevant residual polynomials.
\end{enumerate}


\subsection{Outline of the paper}

In Section \ref{Sec: PrevWork} we outline some previous results in a few relevant areas. Section \ref{Sec: Montes} details the Montes algorithm, which will be our primary tool for proving Theorem \ref{Thm: Main}. Section \ref{Sec: Prelim} surveys a number of results regarding radical extensions that we will employ. Section \ref{Section: PrimePowerExtensions} establishes the factorization of an odd prime $p$ in the extension obtained from $x^{p^m}-a$ when $p\nmid a$. This is already proven by V\'elez in the case where the base field is $\QQ$, see Theorem \ref{Thm: VelezMain}. However, we prove it for a more general class of number fields, and our methods highlight the role played by the valuation of the Wieferich difference, see Definition~\ref{Def: WieferichS4}. Section \ref{Sec: Primesdividingabutnotgcd} establishes Theorem \ref{Thm: pmidanotn}. This theorem describes the factorization of primes $\gp$ dividing $a$ but not $\gcd\big(a,v_\gp(n)\big)$ in the extension obtained by adjoining $\sqrt[n]{a}$. Notably, this result holds with any number field as the base field and includes $\gp$ of even residue characteristic. Section \ref{Section: Primesdividingnnota} describes primes dividing $n$ but not $a$. The key ingredient is Theorem \ref{Thm: psplittingp^mgeneral}, so we are able to adopt the same generality as that used in Section \ref{Section: PrimePowerExtensions}. Section \ref{Sec: DifficultCase} confronts the remaining case of primes $\gp$ dividing $a$ and $\gcd\big(v_\gp(a),n\big)$. This is the most difficult case from the perspective of the Montes algorithm, i.e., Newton polygon techniques. We employ techniques similar to V\'elez to factor $\gp$ in the extension obtain by adjoining the $p^{v_\gp(n)}$-th root of $a$. With this in hand, the Montes algorithm can complete the factorization in $K\big(\hspace{-.5ex} \sqrt[n]{a}\big)$. Though we work in some generality, it is necessary for us to add additional hypotheses for this case. In particular, we require that the localization of $K$ at $\gp$ does not contain any $p$-th roots of unity. Examples are interspersed throughout: \ref{Ex. SpecificCID}, \ref{Ex. LowDegCIDs}, \ref{Ex: 5*7^2}, \ref{Ex: x^27-80}, \ref{Ex: 729}, \ref{Ex: x^5*27-80}, \ref{Ex: 2727}, \ref{Ex: 5^5*26}, \ref{Ex: p=3Knumberfield}, \ref{Ex: ppower1}, \ref{Ex: a=3 to the 2*27 times 80}, \ref{Ex: a=3 to the 5*27 times 26}. There are four figures. Figure \ref{Fig. p+1sides} helps visualize Example \ref{Ex. LowDegCIDs}, while Figures \ref{Fig: pdoesntdividea} and \ref{Fig: xpolygeneraldegree} help visualize the relevant Newton polygons in Theorems \ref{Thm: psplittingp^mgeneral} and \ref{Thm: pmidanotn}, respectively. Figure \ref{Fig: SectionStrategy} surveys our proof strategy in Section \ref{Sec: DifficultCase}.


\subsection*{Acknowledgments}

The author would like to thank Keith Conrad and Jordi Gu\`ardia for the helpful conversations. The author would also like to thank the anonymous referee for the useful comments and suggestions.


\section{Previous work}\label{Sec: PrevWork}

There has been a good deal of recent work on common index divisors. See \cite{MR2251710}, \cite{MR3889325}, \cite{MR4736305}, \cite{MR4563435}, \cite{MR2531162}, \cite{MR4510645}, \cite{MR731633}, as well as other works from these authors for various families of polynomials/fields of a fixed degree. When consulting the literature, it bears noting that the question of whether or not a given a binomial (radical) polynomial or a trinomial polynomial is monogenic has been fully answered. See \cite{g17} and \cite{JKS1}, respectively. For families of varying degree, \cite{PethoPohst} study multiquadratic fields, while \cite{SpearmanWilliamsYang} studying fields generated by polynomials of prime degree with dihedral Galois groups.

We note that \cite{Berwick1927} uses Newton polygon techniques to establish an integral basis for $\QQ(\hspace{-.5ex} \sqrt[n]{a})$. In some cases, this work can describe the splitting of primes dividing $na$, but it does not fully describe splitting.  
The general method is subsumed by earlier work of Ore \cite{OreReview}. 

In \cite{ObusWild}, the author computes bounds on the conductors of extensions obtained by a root of unity and a radical. In a particular case when $p=2$, exact values of the conductor are computed. \cite{Viviani} constructs a uniformizer for $\QQ_p\big(\zeta_p, \sqrt[p^{i+1}]{a}\big)/\QQ_p\big(\zeta_p, \sqrt[p^{i}]{a}\big)$. \cite{BellemareLei} describes a uniformizer for the extension $\QQ_p\big(\zeta_{p^2},\sqrt[p]{p}\big)/\QQ_p$, while \cite{WangYuan} continues down this avenue by constructing a uniformizer for $\QQ_p\big(\zeta_{p^2},\sqrt[p^m]{p}\big)/\QQ_p$ with $m\geq 1$. 

In \cite{Velez1/p}, V\'elez describes the factorization of a prime above $p$ in any extension of a number field obtained by adjoining a $p^{\text{th}}$ power or a $p^{\text{th}}$ root of unity. In the series of papers \cite{MannVelez} and \cite{VelezCoprime}, prime splitting in an arbitrary radical extensions is described in a variety of cases. However, the cases are not exhaustive, especially when $p$ divides the degree. Moreover, the phrasing (in terms of products of prime ideal factor counting functions) is not as conducive to our study of common index divisors. 

In \cite{VelezPrimePower}, V\'elez completely describes the splitting of the prime $p$ in a $p$-power radical extension of $\QQ$. We summarize this for odd primes in Theorem \ref{Thm: VelezMain}. There is some overlap between this work and our present study; however, we employ different methods and have a different goal. V\'elez's primary application was describing the genus field of $\QQ\big(\hspace{-.5ex} \sqrt[n]{a}\big)$, while our application is the classification of common index divisors of $\QQ\big(\hspace{-.5ex} \sqrt[n]{a}\big)$. In addition, our use of the Montes algorithm allows us to tackle radical extensions of arbitrary degree and phrase our results explicitly in terms of valuations. In summary, though there has been a variety of previous studies of prime splitting in radical extensions, the novelty of our current undertaking is that it is complete and self-contained. 


\section{The Montes algorithm and a theorem of Ore}\label{Sec: Montes}


The Montes algorithm is a $p$-adic factorization algorithm that is based on and extends the pioneering work of {\O}ystein Ore \cite{Ore}. We will essentially only employ the aspects developed by Ore here, but we will use the notation and setup of the general implementation. For the complete development of the Montes algorithm, see \cite{GMN}. Our notation will roughly follow \cite{ElFadilMontesNart}, which gives a more extensive summary than we undertake here. One can also consult \cite{JhorarKhanduja}.

Let $p$ be an integral prime, $K$ a number field with ring of integers $\Ocal_K$, and $\gp$ a prime of $K$ above $p$. Write $K_\gp$ to denote the completion of $K$ at $\gp$. By a \textit{uniformizer at $\gp$} or a \textit{uniformizer of $K_\gp$}, we mean an element $\pi_\gp\in\Ocal_K$ such that $v_\gp\big(\pi_\gp\big)=1$. 
Suppose we have a monic, irreducible polynomial $f(x)\in \Ocal_K[x]$. We extend the standard $\gp$-adic valuation to $\Ocal_K[x]$ by defining the $\gp$-adic valuation of $f(x) = a_n x^n + \cdots + a_1 x + a_0 \in \Ocal_K[x]$ to be 
	\[ v_\gp\big(f(x)\big) = \min_{0 \leq i \leq n} \big( v_\gp(a_i) \big). \]
This is sometimes called the \textit{Gauss valuation.}
If $\phi(x), f(x) \in \Ocal_K[x]$ are monic and such that $\deg \phi \leq \deg f$, then we can write
    	\[f(x)=\sum_{i=0}^k a_i(x)\phi(x)^i,\]
for some $k$, where each $a_i(x) \in \Ocal_K[x]$ has degree less than $\deg \phi$. We call the above expression the \emph{$\phi$-adic development} of $f(x)$. We associate to the $\phi$-adic development of $f(x)$ an open Newton polygon by taking the lower convex hull
of the integer lattice points $\big(i,v_p(a_i(x))\big)$. The sides of the Newton polygon with negative slope are the \emph{principal $\phi$-polygon}. 

Write $k_\gp$ for the residue field $\Ocal_K/\gp$, and let $\ol{f(x)}$ be the image of $f(x)$ in $k_\gp[x]$. It will often be the case that we develop $f(x)$ with respect to an irreducible factor $\phi(x)$ of $\ol{f(x)}$. In this situation, we will want to consider the extension of $k_\gp$ obtained by adjoining a root of $\phi(x)$. We denote this finite field by $k_{\gp,\phi}$. We associate to each side of the principal $\phi$-polygon a polynomial in $k_{\gp,\phi}[y]$. Suppose $S$ is a side of the principal $\phi$-polygon with initial vertex $\big(s,v_\gp(a_s(x))\big)$, terminal vertex $\big(k,v_\gp(a_k(x))\big)$, and slope $-\frac{h}{e}$ written in lowest terms. Define the length of the side to be $l(S)=k-s$ and the degree to be $d\coloneqq\frac{l(S)}{e}$. Let $\red:\Ocal_K[x]\to k_{\gp,\phi}$ denote the homomorphism obtained by quotienting by the ideal $\big(\gp,\phi(x)\big)$.
For each $i$ in the range $b\leq i\leq k$, we define the residual coefficient to be
\[c_i=\left\{
\begin{array}{ll}
0 \text{ if }  \big(i,v_\gp(a_i(x))\big)  \text{ lies strictly above } S  \text{ or } v_\gp(a_i(x))=\infty,\\
\red\left(\frac{a_i(x)}{\pi^{v_\gp(a_i(x))}}\right)  \text{ if }  \big(i,v_\gp(a_i(x))\big) \text{ lies on } S.
\end{array}
\right.\]
Finally, the \emph{residual polynomial} of the side $S$ is the polynomial
\[R_S(y)=c_s+c_{s+e}y+\cdots +c_{s+(d-1)e}y^{d-1}+c_{s+de}y^d\in k_{\gp,\phi}[y].\]
Notice, that $c_s$ and $c_{s+de}$ are always nonzero since they are the initial and terminal vertices, respectively, of the side $S$. In this work, we will almost always be developing $f(x)$ with respect to a linear polynomial, so $k_{\gp,\phi}=k_\gp$, and we will often write the latter to ease notation.


Having established notation, we state a theorem that connects prime splitting and polynomial factorization. The ``three dissections" that we will outline below are due to Ore, and the full Montes algorithm is an extension of this. Our statement loosely follows Theorem 1.7 of \cite{ElFadilMontesNart}. 

\begin{theorem}\label{Thm: Ore}[Ore's Three Dissections]
Let $f(x)\in \Ocal_K[x]$ be a monic irreducible polynomial and let $\alpha$ be a root. Suppose
\[\ol{f(x)}=\phi_1(x)^{r_1}\cdots \phi_s(x)^{r_s}.\]
is a factorization into irreducibles in $k_\gp[x]$. Hensel's lemma shows $\phi_i(x)^{r_i}$ corresponds to a factor of $f(x)$ in $K_\gp[x]$ and hence to a factor $\gm_i$ of $\gp \Ocal_{K(\alpha)}$. 

Choose a lift of $\phi_i(x)$ to $\Ocal_K[x]$ and, abusing notation, call this lift $\phi_i(x)$. Developing $f(x)$ with respect to $\phi_i(x)$, suppose the principal $\phi_i$-polygon has sides $S_1,\dots, S_g$. Each side of this polygon corresponds to a distinct factor of $\gm_i$. 

Write $\gn_j$ for the factor of $\gm_i$ corresponding to the side $S_j$. Suppose $S_j$ has slope $-\frac{h}{e}$. If the residual polynomial $R_{S_j}(y)$ is separable, then the prime factorization of $\gn_j$ mirrors the factorization of $R_{S_j}(y)$ in $k_{\gp,\phi_i}[y]$, but every factor of $R_{S_j}(y)$ will have an exponent of $e$. In other words,
\[\text{if } R_{S_j}(y)=\gamma_1(y)\dots\gamma_k(y) \ \text{ in }  \ k_{\gp,\phi_i}[y], \ \text{ then } \ \gn_j = \gP_1^{e}\cdots \gP_k^{e}  \ \text{ in } \ \Ocal_{K(\alpha)},\]
with $\deg(\gamma_m)$ equaling the residue class degree of $\gP_m$ for each $1\leq m\leq k$.
In the case where $R_{S_j}(y)$ is not separable, further developments are required to factor $\gp$. 
\end{theorem}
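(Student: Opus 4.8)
The plan is to pass to the completion $K_\gp$ and assemble the factorization of $\gp$ in three stages, with Hensel's lemma as the engine throughout. Since $K(\alpha)\otimes_K K_\gp\cong\prod_{\gm\mid\gp}K(\alpha)_\gm$, the primes of $K(\alpha)$ above $\gp$, together with their ramification indices and residue degrees, are precisely the monic irreducible factors of $f(x)$ over $K_\gp$: a factor of degree $d$ corresponds to one prime $\gm$ with $e(\gm)f(\gm)=d$. So the whole theorem becomes a statement about factoring $f$ in $\Ocal_{K_\gp}[x]$, and I would stay inside this complete local picture for the rest of the argument.

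\emph{First dissection.} The $\phi_i$ are pairwise coprime, so Hensel's lemma lifts $\ol{f}=\prod_i\phi_i^{r_i}$ to a coprime factorization $f=\prod_i F_i$ in $\Ocal_{K_\gp}[x]$ with $F_i$ monic, $\ol{F_i}=\phi_i^{r_i}$, and $\deg F_i=r_i\deg\phi_i$; coprimality forces the primes extracted from distinct $F_i$ to be distinct, so taking $\gm_i$ to be the product of the prime powers coming from $F_i$ gives $\gp=\prod_i\gm_i$. \emph{Second dissection.} Fix $i$ and expand $F_i$ $\phi_i$-adically. Just as for the ordinary Newton polygon, the sides of the $\phi_i$-Newton polygon, with their lengths, record the values $v_\gp(\phi_i(\theta))$ as $\theta$ ranges over the roots of $F_i$ in $\ol{K_\gp}$; a convexity argument (a Newton-polygon factorization over the complete field $K_\gp$) then produces a coprime factorization $F_i=\prod_j F_{ij}$ in which the roots of $F_{ij}$ are exactly those with $v_\gp(\phi_i(\theta))$ equal to minus the slope of $S_j$, and $\deg F_{ij}=l(S_j)\deg\phi_i$. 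Letting $\gn_j$ be the product of the prime powers coming from $F_{ij}$ gives $\gm_i=\prod_j\gn_j$.

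\emph{Third dissection.} This is the crux. Fix a side $S_j$ of slope $-h/e$ in lowest terms and suppose $R_{S_j}=\prod_m\gamma_m$ is separable in $k_{\gp,\phi_i}[y]$. One first checks that the residual coefficients are arranged exactly so that each root $\theta$ of $F_{ij}$ has a well-defined ``angular component'' in $k_{\gp,\phi_i}$, obtained by reducing $\phi_i(\theta)$ after dividing off the part of its valuation forced by the slope, and that this angular component is a root of $R_{S_j}$. Separability makes the $\gamma_m$ pairwise coprime, so Hensel separates the roots according to which $\gamma_m$ their angular component satisfies, giving a coprime factorization $F_{ij}=\prod_m g_m$ over $K_\gp$. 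It remains to show each $g_m$ is irreducible, which I would do by a degree count: for any prime $\gP$ dividing $g_m$, the slope $-h/e$ forces $e\mid e(\gP)$, and the tower $k_\gp\subseteq k_{\gp,\phi_i}\subseteq k_\gP$ with $\gamma_m$ irreducible over $k_{\gp,\phi_i}$ forces $\deg\phi_i\cdot\deg\gamma_m\mid f(\gP)$; since $\sum_{\gP\mid g_m}e(\gP)f(\gP)=\deg g_m=e\cdot\deg\phi_i\cdot\deg\gamma_m$, both divisibilities are equalities, so $g_m$ is irreducible, attached to a single prime $\gP_m$ with $e(\gP_m)=e$ and residue degree $\deg\gamma_m$ over $k_{\gp,\phi_i}$. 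Hence $\gn_j=\prod_m\gP_m^{e}$ with the $\gP_m$ mirroring the $\gamma_m$, which is the claim (and $k_{\gp,\phi_i}=k_\gp$ when $\phi_i$ is linear, as in every application in this paper); if $R_{S_j}$ is inseparable there is nothing to prove.

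The main obstacle is exactly this last step: certifying that a single $\gamma_m$-class conceals no further splitting or ramification. The degree count is the clean way to close it, but it rests on the two ``lower bound'' facts---slope $\Rightarrow e\mid e(\gP)$, and an irreducible residual factor of degree $\deg\gamma_m$ contributing that much to the residue degree---each of which uses the separability hypothesis and a careful accounting of how $v_\gp$ extends to $K_\gp(\theta)$. This is the heart of Ore's theorem, and precisely the place where dropping separability forces one into the higher-order Newton polygons of the full Montes algorithm.
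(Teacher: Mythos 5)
The paper does not prove this theorem: it is quoted from the literature (Ore's original work and Theorem 1.7 of the El Fadil--Montes--Nart survey) and used as a black box throughout. Your sketch is a faithful reconstruction of the standard argument behind that citation --- pass to $K_\gp$, identify primes above $\gp$ with irreducible factors of $f$ in $K_\gp[x]$, Hensel for the first dissection, the theorem of the polygon for the second, and the angular-component-plus-degree-count argument for the third --- so there is no divergence of approach to report, only a proof where the paper has none.

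One spot in your third dissection deserves a flag: the closing degree count assumes $\deg g_m = e\cdot\deg\phi_i\cdot\deg\gamma_m$, i.e.\ that exactly $e\cdot\deg\phi_i\cdot\deg\gamma_m$ of the roots of $F_{ij}$ have angular component annihilated by $\gamma_m$. That is itself a nontrivial part of the theorem of the residual polynomial (it is where one uses that $\gamma_m$ appears to the first power, guaranteed here by separability), so as written your argument leans on the statement it is meant to establish. It can be closed cleanly: each $g_m$ has degree at least $e\cdot\deg\phi_i\cdot\deg\gamma_m$ by the two lower bounds you prove ($e\mid e(\gP)$ and $\deg\phi_i\cdot\deg\gamma_m\mid f(\gP)$ for some prime $\gP$ dividing $g_m$), and summing over $m$ against $\deg F_{ij}=e\bigl(\sum_m\deg\gamma_m\bigr)\deg\phi_i$ forces equality everywhere, hence each $g_m$ is irreducible with the stated $e$ and $f$. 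With that adjustment the sketch is complete.
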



\section{Preliminaries}\label{Sec: Prelim}

In this section we review and establish a few results that aid our description of prime splitting in radical extensions. We often focus on $\QQ$ to make our discussion more concise and because our goal is to describe splitting in $\QQ\big(\hspace{-.5ex} \sqrt[n]{a}\big)$; however, in later sections we will attempt to be as general as our methods permit.

If $v_p(n)=m$, then the splitting of $p$ in $\QQ\big(\hspace{-.5ex} \sqrt[p^m]{a}\big)$ is a hurdle that must be overcome to obtain the splitting of $p$ in $\QQ\big(\hspace{-.5ex} \sqrt[n]{a}\big)$. In order to surmount this, the following factorization in key:

\begin{equation}\label{Eq: binomexpansionS4}
\begin{split}
x^{p^m}-a&=\left(x-a+a\right)^{p^m}-a\\
&=\left(\sum\limits_{k=0}^{p^m}\binom{p^m}{k}\left(x-a\right)^k a^{p^m-k}\right) -a\\
&=\left(\sum\limits_{k=1}^{p^m}\binom{p^m}{k} a^{p^m-k} \left(x-a\right)^k\right) + a^{p^m}-a.
\end{split}
\end{equation}

The analysis of the expansion in \eqref{Eq: binomexpansionS4} motivates the following lemmas.

\begin{lemma}\label{Lemma: ValBinom}
The $p$-adic valuation of $\binom{p^m}{b}=\binom{p^m}{p^k-b}$ is $m-v_p(b)$.
\end{lemma}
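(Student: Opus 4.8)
The essential content is the formula $v_p\!\left(\binom{p^m}{b}\right)=m-v_p(b)$; the displayed equality of the two binomial coefficients is just the standard symmetry $\binom{p^m}{b}=\binom{p^m}{p^m-b}$, and it suffices to treat $1\le b\le p^m$, the case $b=p^m$ being immediate since then $\binom{p^m}{b}=1$ and $m-v_p(p^m)=0$. My plan is to reduce everything to the statement that $\binom{p^m-1}{b-1}$ is a $p$-adic unit.

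First I would use the elementary identity $b\binom{p^m}{b}=p^m\binom{p^m-1}{b-1}$, which follows from $\binom{n}{k}=\frac{n}{k}\binom{n-1}{k-1}$. Taking $p$-adic valuations of both sides gives
\[ v_p(b)+v_p\!\left(\binom{p^m}{b}\right)=m+v_p\!\left(\binom{p^m-1}{b-1}\right), \]
so the lemma is equivalent to the claim that $v_p\!\left(\binom{p^m-1}{b-1}\right)=0$.

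To prove that claim I would write $\binom{p^m-1}{b-1}=\prod_{i=1}^{b-1}\frac{p^m-i}{i}$ and check that every factor is a $p$-adic unit. For $1\le i\le b-1$ we have $i<p^m$, so writing $i=p^t u$ with $p\nmid u$ forces $t\le m-1$; then $p^m-i=p^t\!\left(p^{m-t}-u\right)$ with $p\nmid p^{m-t}-u$ because $m-t\ge 1$, whence $v_p(p^m-i)=v_p(i)$. Thus the numerator and denominator of the product have equal $p$-adic valuation, so $v_p\!\left(\binom{p^m-1}{b-1}\right)=0$, and combined with the displayed identity this gives $v_p\!\left(\binom{p^m}{b}\right)=m-v_p(b)$.

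I do not anticipate a real obstacle; the one point needing a moment's care is the observation $v_p(p^m-i)=v_p(i)$ for $0<i<p^m$, which is exactly what makes the product above a $p$-adic unit. If a more self-contained route is preferred, the same conclusion follows from Kummer's theorem — adding $b$ to $p^m-b$ in base $p$ produces exactly one carry in each of the positions $v_p(b),v_p(b)+1,\dots,m-1$, for $m-v_p(b)$ carries in total — or from Legendre's formula $v_p(N!)=\frac{N-s_p(N)}{p-1}$ after verifying the digit-sum identity $s_p(b)+s_p(p^m-b)=\bigl(m-v_p(b)\bigr)(p-1)+1$.
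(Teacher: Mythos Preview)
Your argument is correct and is essentially the same as the paper's: both rest on the observation that $v_p(p^m-c)=v_p(c)$ for $1\le c<p^m$, pairing each factor $p^m-c$ in the numerator of $\binom{p^m}{b}$ with $c$ in the denominator so that only $p^m/b$ contributes. You have simply packaged this pairing via the identity $b\binom{p^m}{b}=p^m\binom{p^m-1}{b-1}$, and you justify the key valuation equality a bit more carefully than the paper does.
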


\begin{proof}
We have
\[\binom{p^m}{b}=\frac{p^m (p^m-1) \cdots (p^m-(b-1))}{b (b-1)\cdots 1}.\]
Note that $v_p(p^m-c)=v_p(c)$ for all $1\leq c\leq p^m$. Hence, the $p$-adic valuation of $\binom{p^m}{b}$ is $v_p\left(p^m\right)-v_p(b).$
\end{proof}

For convenience and as an homage to Arthur Wieferich, we make the following definition. This definition will be generalized in the next section.

\begin{definition}\label{Def: WieferichS4}
Define the \textit{Wieferich difference} (of $a$ with respect to $p^m$) to be $a^{p^{m}}-a$. The $p$-adic valuation of this difference is key to describing the splitting of $p$. We denote this valuation with $w$: 
\[w\coloneqq v_p\left(a^{p^{m}}-a\right).\] 
\end{definition}

The valuation of the Wieferich difference does not depend on $m$. 

\begin{lemma}\label{Lem: ExpDoesntMatter}
Let $a\in \ZZ$, then 
\[v_p\left(a^p-a\right) = v_p\left(a^{p^m}-a\right)\]
for every $m>0$.
\end{lemma}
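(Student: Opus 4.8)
The plan is to prove $v_p(a^p - a) = v_p(a^{p^m} - a)$ by induction on $m$, the base case $m=1$ being trivial. For the inductive step, it suffices to show $v_p(a^{p^{m}} - a) = v_p(a^{p^{m-1}} - a)$. Writing $b = a^{p^{m-1}}$, this reduces to the claim $v_p(b^p - b) = v_p(b - a)$ whenever $b \equiv a \pmod p$ — or more directly, I would factor
\[
a^{p^m} - a = a^{p^{m-1}}\bigl(a^{p^m - p^{m-1}} - 1\bigr) + \bigl(a^{p^{m-1}} - a\bigr),
\]
hmm, that is awkward; cleaner is to observe $a^{p^m} - a = \bigl(a^{p^{m-1}}\bigr)^{p} - a^{p^{m-1}} + \bigl(a^{p^{m-1}} - a\bigr)$, so it is enough to show that the term $c^p - c$, with $c = a^{p^{m-1}}$, has strictly larger valuation than (or at least valuation no smaller than the minimum governing) $a^{p^{m-1}} - a$, and then conclude equality.

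The cleanest route: first dispose of the case $p \mid a$, where every term $a^{p^j} - a$ has valuation exactly $v_p(a)$ (since $a^{p^j - 1} - 1$ is a unit mod $p$ when, say, $p \nmid a$... wait — if $p\mid a$ then $a^{p^j} - a = a(a^{p^j-1}-1)$ and $a^{p^j-1}-1 \equiv -1 \pmod p$, so the valuation is $v_p(a)$ for all $j$, done). So assume $p \nmid a$. Then by Fermat, $a^{p-1} \equiv 1 \pmod p$, hence $a^{p^{m-1}} \equiv a \pmod p$, and I want to compare $a^{p^{m-1}} - a$ with its ``$p$-th power lift.'' Set $c = a^{p^{m-1}}$ and $d = a$; both are units, $c \equiv d \pmod p$. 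The identity
\[
c^p - d^p = (c - d)\bigl(c^{p-1} + c^{p-2}d + \cdots + d^{p-1}\bigr),
\]
combined with the observation that each of the $p$ summands in the second factor is $\equiv c^{p-1} \equiv d^{p-1} \pmod p$, shows the second factor is $\equiv p\, d^{p-1} \equiv 0 \pmod p$, i.e. has valuation $\geq 1$. Therefore
\[
v_p(c^p - d^p) = v_p(c - d) + v_p\!\bigl(c^{p-1} + \cdots + d^{p-1}\bigr) \geq v_p(c-d) + 1 > v_p(c - d).
\]
Now $a^{p^m} - a = c^p - d^p + (d^p - d) = (c^p - d^p) + (a^p - a)$. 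Wait, I need to be careful: $c^p - d^p = a^{p^m} - a^p$, so $a^{p^m} - a = (a^{p^m} - a^p) + (a^p - a)$, and $v_p(a^{p^m} - a^p) = v_p(a^{p^{m-1}} - a) + (\text{something} \geq 1)$. By the inductive hypothesis $v_p(a^{p^{m-1}} - a) = v_p(a^p - a) =: w$, so $v_p(a^{p^m} - a^p) \geq w + 1 > w = v_p(a^p - a)$, and by the ultrametric inequality $v_p(a^{p^m} - a) = \min = w$. This closes the induction.

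The main obstacle — really the only subtlety — is verifying that the auxiliary factor $c^{p-1} + c^{p-2}d + \cdots + d^{p-1}$ is divisible by $p$ when $c \equiv d \pmod p$; this is where the prime $p$ (versus a general exponent) is essential, and it is exactly the step that fails for $p = 2$ in some related lemmas, so it is worth stating explicitly. Everything else is the ultrametric inequality applied carefully to keep track of which term achieves the strict minimum. I should also note that the lemma as stated is for $a \in \ZZ$, but the same argument works verbatim for $a$ in the ring of integers of any number field with $p$ replaced by a prime $\gp$ above it, which is presumably how it gets used later; I would mention this only if the downstream applications need it.
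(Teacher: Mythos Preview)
Your proof is correct. After some initial meandering, you settle on a clean inductive argument: handle $p\mid a$ directly, and for $p\nmid a$ write $a^{p^m}-a=(c^p-d^p)+(a^p-a)$ with $c=a^{p^{m-1}}$, $d=a$, use the factorization $c^p-d^p=(c-d)\sum c^id^{p-1-i}$ together with $c\equiv d\pmod p$ to get $v_p(c^p-d^p)\geq v_p(c-d)+1$, invoke the inductive hypothesis, and finish by the ultrametric inequality. This is essentially the lifting-the-exponent mechanism, and it works for all primes including $p=2$ (your cautionary remark about $p=2$ is unnecessary here, though harmless).

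The paper takes a different, non-inductive route: it reduces to comparing $v_p(a^{p-1}-1)$ with $v_p(a^{p^m-1}-1)$, writes the base-$p$ expansion $a^{p-1}=1+a_wp^w+\cdots$, observes that $p^m-1=(p-1)(1+p+\cdots+p^{m-1})$ with the second factor congruent to $1$ modulo $p$, and then raises $1+a_wp^w+\cdots$ to that exponent and reads off that the leading correction term is still $a_wp^w$ times a $p$-adic unit. Your inductive argument is more conceptual and arguably cleaner; the paper's explicit expansion is more hands-on and makes the exact $p$-adic coefficient visible, which fits the surrounding Newton-polygon computations. Both generalize to a prime $\gp$ over $p$ in a number field under the hypothesis $a^p\equiv a\pmod\gp$, as you note and as the paper also remarks.
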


\begin{proof}
If $p\mid a$, then this is clear. Suppose $p\nmid a$. It suffices to show that 
\[v_p\left(a^{p-1}-1\right) = v_p\left(a^{p^m-1}-1\right)\]
The smallest of Fermat's theorems tells us that the base-$p$ expansion of $a^{p-1}$ has the form 
\[a^{p-1}=1+a_wp^w+(\text{higher powers of }p)\]
where each $a_i$ is in the range $0< a_i <p$. Clearly, 
\[v_p\left(a^{p-1}-1\right)=v_p\left(a_wp^w+(\text{ higher powers of }p)\right)=w.\]
Note $p^m-1=(p-1)(p^{m-1}+p^{m-2}+\cdots+p+1)$
,so
\begin{align*}
a^{p^m-1} &= \left( a^{p-1}\right)^{p^{m-1}+p^{m-2}+\cdots+p+1}\\
&=\big(1+a_wp^w+(\text{higher powers of }p)\big)^{p^{m-1}+p^{m-2}+\cdots+p+1}\\
&=1 +\left(p^{m-1}+p^{m-2}+\cdots+p+1\right) a_wp^w + (\text{higher powers of }p).
\end{align*}
We can now see that 
\[v_p\left(a^{p^m-1}-1\right)=v_p\left(a_wp^w+(\text{higher powers of }p)\right)=w. \qedhere \]
\end{proof}

Notice that this proof will hold, mutatis mutandis, for an arbitrary prime $\gp$ of residue characteristic $p$ in some number field $K$ so long as we require that either $\gp\mid a\Ocal_K$ or $a^p\equiv a\bmod \gp$. 

\begin{remark}\label{Rmk: AlmostCyclotomicPolys}
In many ways the behavior of radical extensions agrees with the behavior that we are accustomed to in cyclotomic extensions. It is this analogy that motivates the clean proofs in \cite{VelezPrimePower}.

Following V\'elez, define $s$ to be such that $a\in \QQ_p^{p^s}$ but $a\notin \QQ_p^{p^{s+1}}$. In other words, $a$ is a $p^s$-power in $\QQ_p$ but not a $p^{s+1}$-power. Later, we will explicitly describe $s$ in terms of a valuation. When $p$ does not divide both $a$ and $v_p(a)$, then $s$ is simply one less than the valuation of the Wieferich difference: $v_p(a^p-a)-1$.

Write $\Phi_{p^j}\big(x,\sqrt[p^s]{a}\big)$ for the ``twisted cyclotomic polynomial" whose roots are $\zeta_{p^j}^k \sqrt[p^{s}]{a}$ with $\zeta_{p^j}^k$ primitive. Explicitly, if $\zeta_{p^j}$ is a primitive $p^j$-th root of unity, then
\[\Phi_{p^j}\left(x,\sqrt[p^s]{a}\right)=\prod_{\substack{{1\leq k< p^j} \\ {\gcd(k,p)=1}}} x-\zeta_{p^j}^k \sqrt[p^{s}]{a}.\]
When $\sqrt[p^{s}]{a}\in \ZZ_p$, then we have the factorization
\[x^{p^{s}}-a=x^{p^{s}}-\left(\hspace{-.5ex} \sqrt[p^{s}]{a}\right)^{p^{s}}=\left(x-\sqrt[p^{s}]{a}\right)\prod_{1\leq k\leq s} \Phi_{p^k}\left(x,\sqrt[p^s]{a}\right). \]

When $p^m>p^{s}$, the factorization of $x^{p^m}-a$ in $\ZZ_p[x]$ is 
\[x^{p^m}-a= \left(x^{p^{m-s}}\right)^{p^{s}}-\left(\hspace{-.5ex} \sqrt[p^{s}]{a}\right)^{p^{s}} =\left(x^{p^{m-s}}-\sqrt[p^{s}]{a}\right)\prod_{s\leq k \leq m} \Phi_{p^k}\left(x^{p^{m-w}},\sqrt[p^s]{a}\right) .\]

It is this clever factorization and a lemma about ramification in the compositum of a cyclotomic field and a radical extension that V\'elez uses to give a clean proof of the factorization of the odd prime $p$ in the extension $\QQ\big(\hspace{-.5ex} \sqrt[p^m]{a}\big)$. 
\end{remark}

Summarizing Theorems 2 and 5 of \cite{VelezPrimePower}:

\begin{theorem}\label{Thm: VelezMain}
If $s\geq m$, then 
\[  p\Ocal_{\QQ\left(\hspace{-.5ex}\sqrt[ p^m]{a}\right)}  = \gp_0\big(\gp_1\gp_2^p\cdots \gp_m^{p^{m-1}}\big)^{p-1}.\] 
If $s< m$, then
\[\LPO{p}{\QQ}{p^m}{a} =\gp_0^{p^{m-s}}\big(\gp_1\gp_2^p\cdots \gp_s^{p^{s-1}}\big)^{(p-1)p^{m-s}}.\] 
\end{theorem}

Though our proofs with Newton polygons are more involved, they allow for more generality as well as a description that depends only on $p$-adic valuations.  


We will also use a classical result on the irreducibility of radical polynomials called the Vahlen--Capelli Theorem. For a reference see \cite{Turnwald} or Chapter 6, \S 9 of \cite{Algebra}.

\begin{theorem}\label{Thm: VahlenCapelli}
Let $K$ be a field and $x^n-a\in K[x]$. Then $x^n-a$ is reducible over $K$ if and only if for some prime $p\mid n$ we have $a\in K^p$ or $4\mid n$ and $a\in -4K^4$.
\end{theorem}


\section{The factorization of primes above $p$ and not dividing $a$ in $K\big(\hspace{-.5ex} \sqrt[p^m]{a}\big)$}\label{Section: PrimePowerExtensions}

Though our main goal is a description of the splitting of odd primes in an arbitrary radical extension of $\QQ$, we will work in a more general situation here since this setup will be required in Sections \ref{Section: Primesdividingnnota} and \ref{Sec: DifficultCase} and because the more general results are interesting in their own right. Let $K$ be a number field and let $\gp$ be a prime of $K$ above $p$. Write $e_\gp$ to denote the ramification index of $\gp$ over $p$. Suppose $\gcd(e_\gp,p)=1$; i.e., $\gp$ is not wildly ramified over $p$. Write $f$ for the residue class degree; i.e., $|\Ocal_K/\gp|=p^f$. We consider an irreducible polynomial $x^{p^m}-a$ in $\Ocal_K[x]$ and we suppose $\gp$ is prime to $a\Ocal_K$. In this section we aim to explicitly describe the factorization of $\gp$ in the ring of integers of $K\big(\hspace{-.5ex} \sqrt[p^m]{a}\big)$.

When $f=1$, then $a$ is a $p^m$-th root of $a$ modulo $\gp$; however, we will construct a $p^m$-th root of a generic $a$ here. Let $\mu\equiv m\bmod f$ be such that $1\leq\mu \leq f$. Now $a^{p^{f-\mu}}$ is a $p^m$-th root of $a$ in $\Ocal_K/\gp$. Say $m=kf+\mu$, so
\[\left(a^{p^{f-\mu}}\right)^{p^m} = a^{p^{f-\mu+m}} = a^{p^{f(k+1)}} \equiv a \bmod \gp.\]

Ultimately, we want the factorization of $x^{p^m}-a$ in $K_\gp[x]$, where $K_\gp$ is the completion of $K$ at $\gp$. Proceeding with the Montes algorithm, we start by reducing modulo $\gp$:
\[x^{p^m}-a\equiv \left(x - a^{p^{f-\mu}}\right)^{p^m} \bmod \gp.\]
Thus, we need to take the $(x - a^{p^{f-\mu}})$-adic development.

\begin{equation}\label{Eq: binomexpansion}
\begin{split}
x^{p^m}-a&=\left(x-a^{p^{f-\mu}}+a^{p^{f-\mu}}\right)^{p^m}-a\\
&=\left(\sum\limits_{k=0}^{p^m}\binom{p^m}{k}\left(x-a^{p^{f-\mu}}\right)^k\left(a^{p^{f-\mu}}\right)^{p^m-k}\right) -a\\
&=\left(\sum\limits_{k=1}^{p^m}\binom{p^m}{k}\left(a^{p^{f-\mu}}\right)^{p^m-k}\left(x-a^{p^{f-\mu}}\right)^k\right) + a^{p^{f-\mu+m}}-a.
\end{split}
\end{equation}

We see that the behavior of the principal $\big(x-a^{p^{f-\mu}}\big)$-polygon depends on the valuation of $a^{p^{f-\mu+m}}-a$, so we generalize Definition \ref{Def: WieferichS4}.

\begin{definition}\label{Def: Wieferich}
Define the \textit{Wieferich difference} (of $a$ with respect to $p^m$ and $\gp$) to be $a^{p^{f-\mu+m}}-a$. We will be particularly interested in the $p$-adic valuation of this difference, which we will denote by 
\[w\coloneqq v_p\left(a^{p^{f-\mu+m}}-a\right).\] 
We have suppressed $a$, $p^m$, and $\gp$ in the notation since context will make these clear. 
\end{definition}

Definition \ref{Def: Wieferich} formalizes the extent to which a lift of a root of $x^{p^m}-a$ modulo $\gp$ remains a root of $x^{p^m}-a$. With our definition of the Wieferich difference solidified, the following theorem demonstrates that the factorization of $\gp\Ocal_{K(\hspace{-.5ex} \sqrt[p^m]{a})}$ depends heavily on $w$. 


\begin{theorem}\label{Thm: psplittingp^mgeneral} Let $\gp$ be a prime ideal in $\Ocal_K$ above the odd prime $p$, and let $x^{p^m}-a\in \Ocal_K[x]$ be irreducible with $v_{\gp}(a)=0$ and $w$ as above. Suppose the ramification index $e_\gp$ is not divisible by $p$. 
Let $l$ denote $\lceil\frac{w}{e_\gp}-\frac{p}{p-1}\rceil$. If $l \leq 0$, then suppose $p\dnd w$, and if $l < m$, then suppose $p \dnd \big(e_\gp l-w\big)$. 
Write $b=\min(l,m)$, then we have the prime ideal factorization 
\[\gp\Ocal_{K\left(\hspace{-.5ex}\sqrt[ p^m]{a}\right)} = \gP^{p^{m - b} }  \prod_{i=m-b+1}^{m} \gI_i^{\varphi(p^{i})/\gcd(e_\gp,p-1)},   \]
where $\varphi$ is Euler's phi function. If $b\leq 0$, then the empty product is taken to be 1 and $\gp$ is totally ramified in $\Ocal_{K(\hspace{-.5ex} \sqrt[p^m]{a})}$. Further, writing $\pi_\gp$ for a uniformizer at $\gp$, the factorization of the ideal\footnote{Here we label ideals with the exponent of $p$ in the $x$-coordinate of the terminal (right-most) vertex of the side of the principal $\big(x - a^{p^{f-\mu}}\big)$-polygon they correspond to. Proposition \ref{Prop: DifficultCase} employs a different labeling.}  
$\gI_i\subset \Ocal_{K(\hspace{-.5ex} \sqrt[p^m]{a})}$ mirrors the factorization of 
\[ \frac{\binom{p^m}{p^{i-1}}}{\pi_\gp^{e_\gp(m-i+1)}}\left(a^{p^{f-\mu}}\right)^{p^m-p^{i-1}} + \frac{\binom{p^m}{p^{i}}}{\pi_\gp^{e_\gp(m-i)}}\left(a^{p^{f-\mu}}\right)^{p^m-p^{i}} \ y^{\gcd(e_\gp,p-1)} \in k_{\gp}[y]. \]
\end{theorem}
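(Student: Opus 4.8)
The plan is to run Ore's three dissections (Theorem~\ref{Thm: Ore}) on $f(x) = x^{p^m} - a$ developed with respect to $\phi(x) = x - a^{p^{f-\mu}}$, using the explicit binomial expansion~\eqref{Eq: binomexpansion}. First I would record the $\phi$-adic development: the coefficient of $(x-a^{p^{f-\mu}})^k$ is $\binom{p^m}{k}(a^{p^{f-\mu}})^{p^m-k}$ for $1 \le k \le p^m$, and the constant term is $a^{p^{f-\mu+m}} - a$, which has $\gp$-adic valuation $e_\gp w$ (since $v_\gp = e_\gp v_p$). By Lemma~\ref{Lemma: ValBinom}, the point $(p^j, \cdot)$ on the Newton polygon sits at height $v_\gp\big(\binom{p^m}{p^j}\big) = e_\gp(m-j)$ for $0 \le j \le m$, and a short convexity check shows these are exactly the vertices of the principal $\phi$-polygon that matter (the points with $k$ not a power of $p$ lie on or above the lower hull of the $k = p^j$ points, by Lemma~\ref{Lemma: ValBinom} again). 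So the principal polygon is the lower convex hull of $\{(1,e_\gp(m-1)), (p,e_\gp(m-2)), \dots, (p^{m-1}, 0), (0, e_\gp w)\}$ restricted to negative slopes; the vertex $(0,e_\gp w)$ participates only if it lies below the line through $(1, e_\gp(m-1))$ with the appropriate slope, i.e. when $e_\gp w < e_\gp m$ roughly — this is where the quantity $l = \lceil w/e_\gp - p/(p-1)\rceil$ and $b = \min(l,m)$ come in, splitting into the $b = m$ (constant term irrelevant, polygon reaches the $x$-axis at $p^{m-1}$) and $b < m$ cases.

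Next I would identify the sides and their slopes. For $m-b+1 \le i \le m$, the segment from $(p^{i-1}, e_\gp(m-i+1))$ to $(p^i, e_\gp(m-i))$ has slope $-e_\gp/(p^i - p^{i-1}) = -e_\gp/\varphi(p^i)\cdot\frac{1}{p^{i-1}}$... more precisely slope $-e_\gp / (p^{i-1}(p-1))$, which in lowest terms has denominator $e = p^{i-1}(p-1)/\gcd(e_\gp, p-1) = \varphi(p^i)/\gcd(e_\gp,p-1)$ (using $\gcd(p,e_\gp)=1$), and length $l(S) = p^i - p^{i-1} = \varphi(p^i)$, so degree $d = l(S)/e = \gcd(e_\gp,p-1)$. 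This is exactly the ramification exponent attached to the side by Ore's theorem, matching the exponent $\varphi(p^i)/\gcd(e_\gp,p-1)$ in the statement. The residual polynomial $R_{S_i}(y)$ then has degree $\gcd(e_\gp,p-1)$ and, reading off the residual coefficients at the two endpoints of $S_i$ (the only lattice points of the right shape on the side, since intermediate $\binom{p^m}{k}$ with $k$ between consecutive powers of $p$ lie strictly above by Lemma~\ref{Lemma: ValBinom}), we get precisely the binomial expression in $k_\gp[y]$ displayed in the theorem. The separability hypotheses — $p \nmid w$ when $l \le 0$, and $p \nmid (e_\gp l - w)$ when $l < m$ — are exactly the conditions forcing the relevant residual polynomials (the one on the side touching $(0,e_\gp w)$, respectively the last "$x$-axis" side) to be separable, so that Ore's three dissections terminate and no further Montes iteration is needed.

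The last piece is the totally ramified factor $\gP^{p^{m-b}}$. After the $b$ sides above are accounted for, the remaining part of $f$ corresponds to a side (or tail) of slope steeper than $-e_\gp/(p^{m-1})$... actually the factor of degree $p^{m-b}$ coming from the initial vertex region: this corresponds either to the long side from $(0, e_\gp w)$ down to the first retained vertex when $b = l < m$, or to the single side ending at $(p^{m-1},0)$ when $b = m$; in either case its length is $p^{m-b}$ with the relevant residual polynomial linear (degree $1$, after dividing out the $\varphi(p^i)$-sides), giving one prime $\gP$ with ramification $p^{m-b}$ over $\gp$ and residue degree $f$. I should double-check that when $b \le 0$ the whole polygon is one side of length $p^m$, slope forcing total ramification, and the empty product convention holds. \textbf{The main obstacle} I anticipate is the careful bookkeeping of the Newton polygon vertices — verifying that the lower convex hull of the points $(p^j, e_\gp(m-j))$ together with $(0, e_\gp w)$ is exactly as claimed for all relationships between $w$, $e_\gp$, and $m$, and that no non-power-of-$p$ lattice point ever becomes a vertex — together with matching the ceiling expression $\lceil w/e_\gp - p/(p-1)\rceil$ to the precise index where the constant-term vertex stops contributing. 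Once the polygon is pinned down, the slopes, lengths, degrees, residual polynomials, and the translation to prime factorization via Theorem~\ref{Thm: Ore} are essentially forced.
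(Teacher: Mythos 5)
Your plan is the paper's proof: develop $x^{p^m}-a$ with respect to $x-a^{p^{f-\mu}}$ via \eqref{Eq: binomexpansion}, place the lattice points with Lemma \ref{Lemma: ValBinom}, read off sides, and apply Theorem \ref{Thm: Ore}. Your analysis of the sides $S_i$ for $m-b+1\le i\le m$ (slope $-e_\gp/\big(p^{i-1}(p-1)\big)$, length $\varphi(p^i)$, degree $\gcd(e_\gp,p-1)$, binomial residual polynomial supported only at the two endpoints because intermediate points lie strictly above) is correct and matches the paper. However, the step you defer as ``the main obstacle'' is the bulk of the paper's argument, not an afterthought: one must prove that the terminal vertex of the leftmost side is exactly $\big(p^{m-b},e_\gp b\big)$ and that this is what $l=\lceil w/e_\gp-p/(p-1)\rceil$ encodes. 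The paper does this by pairwise comparison of the candidate slopes $\frac{e_\gp(m-i)-w}{p^i}$ (its equation \eqref{Eq: smallslopes}), reducing the comparison to the two indices $i=m-\lceil w/e_\gp\rceil+1$ and $i=m-\lceil w/e_\gp\rceil+2$ and deciding between them according to whether $\lceil w/e_\gp\rceil-1+\frac{1}{p-1}<w/e_\gp$. Without this you have not established the exponent $p^{m-b}$ of $\gP$ or the starting index $m-b+1$ of the product, which is precisely the content of the theorem.

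Two concrete slips would also derail the bookkeeping if carried forward. First, your vertex list is off by one: Lemma \ref{Lemma: ValBinom} gives $v_\gp\binom{p^m}{p^j}=e_\gp(m-j)$, so the candidate points are $(1,e_\gp m),(p,e_\gp(m-1)),\dots,(p^{m-1},e_\gp),(p^m,0)$, and the polygon meets the $x$-axis at $p^m$, not $p^{m-1}$; your later segment formula uses the correct heights, so your write-up is internally inconsistent. Second, you place the initial vertex at height $e_\gp w$, whereas the proof must place it at $(0,w)$ with $w$ the \emph{normalized $\gp$-adic} valuation of the Wieferich difference --- the only reading compatible with the slopes $\frac{e_\gp(m-i)-w}{p^i}$ and with $l=\lceil w/e_\gp-p/(p-1)\rceil$ (Definition \ref{Def: Wieferich} writes $v_p$, but Section \ref{Section: Primesdividingnnota} confirms $v_\gp$ is intended). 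With your normalization the one-sidedness threshold would be $w\le p/(p-1)$ rather than $w/e_\gp\le p/(p-1)$, yielding a different $l$. Finally, a small clarification: both hypotheses $p\nmid w$ and $p\nmid(e_\gp l-w)$ concern the single side with initial vertex $(0,w)$; they force its slope, in lowest terms, to have denominator exactly $p^{m-b}$, so that side has degree $1$ and contributes the one totally ramified prime $\gP^{p^{m-b}}$. The remaining sides are separable automatically since their degrees $\gcd(e_\gp,p-1)$ are prime to $p$.
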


Excluding number fields $K$ with wild ramification above $p$ is necessary for our methods. When the numerator of the slopes of the relevant principal polygon is divisible by $p$, one must continue through the Montes algorithm in a manner that is often difficult to do generically. We will employ different methods to deal with this phenomenon in Section \ref{Sec: DifficultCase}.

\begin{proof}
We will use the Montes algorithm to factor $x^{p^m}-a$ over $K_\gp[x]$. We have 
\[x^{p^m}-a\equiv \left(x-a^{p^{f-\mu}}\right)^{p^m} \bmod \gp.\]
Equation \eqref{Eq: binomexpansion} yields the $\big(x - a^{p^{f-\mu}}\big)$-adic development. The lower convex hull of the points corresponding to the valuations of the coefficients of this development is the principal $\big(x-~a^{p^{f-\mu}}\big)$-polygon.


\begin{figure}[h!]
\begin{tikzpicture}

      \draw[<->] (0,4.5) -- (0,0) -- (9.5,0);
      \draw [fill] (0,4) circle [radius = .05];

      \node [left] at (-.2,4) {$w=4$};
      
      \draw [fill] (1/3,3) circle [radius = .05];
      \node [right] at (.5,3.2) {$\left(1,v_3\binom{27}{1}\right)$};
	  \draw [fill] (1,2) circle [radius = .05]; 
	  \node [right] at (1.2,2.3) {$\left(3,v_3\binom{27}{3}\right)$};     
      \draw [fill] (3,1) circle [radius = .05];
      \node [right] at (3.15,1.35) {$\left(9,v_3\binom{27}{9}\right)$};   
      \draw [fill] (9,0) circle [radius = .05];
      \node [above] at (9,-.7) {$27$};
      \foreach \x in  {1,2,3,4,5,6,7,8,9} {
      		\draw (\x, 2pt) -- +(0,-4pt);
      }
      \foreach \y in  {1,2,3,4} {
      		\draw (2pt, \y) -- +(-4pt, 0);
      }
      
      \draw (0,4) -- (1/3,3);
      \draw (1/3,3) -- (1,2);
      \draw (1,2) -- (3,1);
      \draw (3,1) -- (9,0);
	\end{tikzpicture}
  \caption{Example principal $(x - a)$-polygon corresponding to $(3)=\gP\gP_1^2\gP_2^6\gP_3^{18}$}
  \label{Fig: pdoesntdividea}
\end{figure}


The first vertex of the principal $\big(x-a^{p^{f-\mu}}\big)$-polygon of $x^{p^m}-a$ is $(0,w)$ and the last vertex is $(p^m,0)$. To begin, we will investigate when the polygon is one-sided. From Lemma \ref{Lemma: ValBinom} and since $\gp\dnd a\Ocal_K$, the possible candidates for the terminal vertex of the leftmost side of the principal $\big(x-a^{p^{f-\mu}}\big)$-polygon vertices are
\[\left\{\left(1,v_\gp\binom{p^m}{1}\right), \left(p,v_\gp\binom{p^m}{p}\right), \dots , \left(p^{m-1},v_\gp\binom{p^m}{p^{m-1}}\right),\left(p^{m},v_\gp\binom{p^m}{p^{m}}\right)\right\},\footnote{We have conflated binomial and valuation parentheses for readability.}\]
which we can rewrite as simply
\[\left\{\left(1,e_\gp m\right), \big(p,e_\gp(m-1)\big), \dots , \left(p^{m-1},e_\gp\right), \left(p^{m},0\right)\right\}.\] 

The possible slopes of the first side are 
\[
\left\{  e_\gp m - w, \frac{e_\gp (m-1)-w}{p}, \frac{e_\gp(m-2)-w}{p^2}, \dots , \frac{e_\gp-w}{p^{m-1}}, -\frac{w}{p^m} \right\},
\]
with the last possibility corresponding to a one-sided polygon. Thus the principal $\big(x-a^{p^{f-\mu}}\big)$-polygon is one-sided if and only if $-\frac{w}{p^m}\leq \frac{e_\gp(m-i)-w}{p^i}$ for all $0\leq i<m$. 
This is equivalent to $\frac{w}{e_\gp} \leq \frac{m-i)}{p^{m-i}-1}+m-i$ for all $0\leq i<m$. The minimum value is achieved when $i=m-1$. Thus, the principal $\big(x-a^{p^{f-\mu}}\big)$-polygon is one-sided if and only if $\frac{w}{e_\gp}\leq \frac{p}{p-1}\iff l = \lceil\frac{w}{e_\gp}-\frac{p}{p-1}\rceil\leq 0$. In this case, $\gp$ is totally ramified in $K\big(\hspace{-.5ex} \sqrt[p^m]{a}\big)$ since our hypothesis is that $p\dnd w$. 


Henceforth, we will assume $l>0$, so the principal $\big(x-a^{p^{f-\mu}}\big)$-polygon has at least two sides. The leftmost side $S_1$ originates at $(0,w)$. 
To simplify the exposition, we will first deal with the case where $S_1$ has length 1. This occurs exactly when $e_\gp m-w<\frac{e_\gp(m-1)-w}{p}$, which simplifies to $l=\lceil\frac{w}{e_\gp}-\frac{p}{p-1}\rceil\geq m$. 
We see $b = m$, and the terminal vertex of $S_1$ is $(1,e_\gp m)$. Hence, $S_1$ corresponds to an unramified, degree 1 prime of $K\big( \hspace{-.5ex}\sqrt[p^m]{a} \big)$ above $\gp$. 

Continuing in the case where $l\geq m$, the second side $S_2$ of the principal $\big(x-a^{p^{f-\mu}}\big)$-polygon terminates at $\big(p,e_\gp(m-1)\big)$ and has slope $-\frac{e_\gp}{p-1}$. Calculating from left to right, the residual polynomial associated to the second side is 
\[R_{S_2}(y)=\frac{\binom{p^m}{1}}{\pi_\gp^{e_\gp m}}\left(a^{p^{f-\mu}}\right)^{p^m-1} + \frac{\binom{p^m}{p}}{\pi_\gp^{e_\gp (m-1)}} \left(a^{p^{f-\mu}}\right)^{p^m-p} y^{\gcd(e_\gp,p-1)} \in k_{\gp,x-a^{p^{f-\mu}}}[y] = k_\gp[y]  .\] 
Thus we have uncovered the partial factorization $\gP \gI_{1}^{(p-1)/\gcd(e_\gp,p-1)}$ where the prime factorization of $\gI_{1}$ mirrors the factorization of $R_{S_2}(y)$ in $k_\gp[y]$. Since the terminal vertex of $S_2$ is $\big(p,e_\gp (m-1)\big)$, the rest of our polygon will agree with the case where $l<m$ 
and the side $S_1$ has length $p$.

Suppose now that we are in the case where $l<m$, so $b=l$. 
The slopes of the sides of the principal $\big(x-a^{p^{f-\mu}}\big)$-polygon must be negative. Thus, as possibilities for the terminal vertex of the first side $S_1$ we have the points $\big(p^i, e_\gp(m-i)\big)$ with corresponding slopes $\frac{e_\gp(m-i)-w}{p^i}$ for $m-\frac{w}{e_\gp} < i < m$ or $m- \big(\lceil \frac{w}{e_\gp} \rceil-1\big) \leq i \leq m-1$. 

If $m- \big(\lceil \frac{w}{e_\gp} \rceil-1\big) \leq i,j \leq m-1$ with $i<j$, then we see that 
\begin{equation}\label{Eq: smallslopes}
\frac{e_\gp(m-i)-w}{p^i}<\frac{e_\gp(m-j)-w}{p^j} \iff m-i+\frac{j-i}{p^{j-i}-1}<\frac{w}{e_\gp}.
\end{equation}
Since $p$ is odd, $\frac{j-i}{p^{j-i}-1}<1$. Hence, if $m- \big(\lceil \frac{w}{e_\gp} \rceil-1\big)<i<j$, then \eqref{Eq: smallslopes} shows the slope corresponding to terminal vertex $\big(p^i, e_\gp(m-i)\big)$ is less than that corresponding to $\big(p^j, e_\gp(m-j)\big)$. 
Thus, we need only compare $i = m- \lceil \frac{w}{e_\gp} \rceil+1$ and $j= m- \lceil \frac{w}{e_\gp} \rceil+2$.
Equation \eqref{Eq: smallslopes} shows the terminal vertex for $S_1$ is 
\[\left(p^{m-\lceil \frac{w}{e_\gp}\rceil +1}, e_\gp\left(\left\lceil \frac{w}{e_\gp}\right\rceil-1\right)\right) \iff \left\lceil \frac{w}{e_\gp}\right\rceil-1 +\frac{1}{p-1}<\frac{w}{e_\gp}\iff l=\left\lceil \frac{w}{e_\gp}\right\rceil-1.\]
Conversely, the terminal vertex is 
\[\left(p^{m-\lceil \frac{w}{e_\gp}\rceil +2}, e_\gp\left(\left\lceil \frac{w}{e_\gp}\right\rceil-2\right)\right) \iff l=\left\lceil \frac{w}{e_\gp}\right\rceil-2.\]
Notice that in both cases we can write the terminal vertex as $\big(p^{m-l},e_\gp l\big)=\big(p^{ m- b},e_\gp b\big)$.
Hence, the slope of $S_1$ is 
$(e_\gp b - w)\big/p^{ m-  b}$. 
By hypothesis, $p$ does not divide $e_\gp l - w$. Thus, $S_1$ corresponds to a degree 1 prime $\gP$ above $\gp$ with ramification index $p^{m-  b}$. 

For the second side $S_2$, the possibilities for the terminal vertex are the points $\big(p^i, e_\gp(m-i)\big)$, with $m-  b + 1 \leq i \leq m$. The corresponding slopes are $\big(e_\gp(m- i) - e_\gp b \big)\big/ \big(p^i-p^{m-  b}\big)$. Here, the least value of $i$ results in the least slope, so the terminal vertex of the second side is $\big(p^{m-  b + 1}, e_\gp(  b - 1)  \big)$ and the slope is 
$-e_\gp \big/ p^{m-  b}(p-1)$.

Recalling, $p\dnd e_\gp$, we see the residual polynomial in $k_{\gp,x-a^{p^{f-\mu}}}[y]=k_\gp[y]$ associated to the side $S_2$ is 
\[
R_{S_2}(y)=\frac{\binom{p^m}{p^{m-  b}}}{\pi_\gp^{e_\gp b}} \left(a^{p^{f-\mu}}\right)^{p^{m}- p^{m-  b}} \\  
+  \frac{\binom{p^m}{p^{m-  b + 1}}}{\pi_\gp^{e_\gp( b - 1) }} \left(a^{p^{f-\mu}}\right)^{p^{m} - p^{m-  b +1} }  \ \  y^{\gcd(e_\gp,p-1)} .
\] 
Thus $S_2$ corresponds to a factor $\gI_{m-  b + 1}^{p^{m-  b}(p-1)/\gcd(e_\gp,p-1)}$ of $\gp$ in $K\big(\hspace{-.5ex} \sqrt[p^m]{a}\big)$ where the prime ideal factorization of $\gI_{m-  b + 1}$ mirrors the factorization of $R_{S_2}(y)$ into irreducibles in $k_\gp[y]$.

One continues this process to achieve a principal $\big(x-a^{p^{f-\mu}}\big)$-polygon with $ b+1$ sides and slopes $(e_\gp b-w)\big/p^{ m-  b }$ and $-e_\gp \big/p^{m- b+j}(p-1)$ with $0\leq j\leq  b-1$. 

For example, if $ b=l\geq 2$, the third side $S_3$ will have slope $-e_\gp\big/p^{m-  b +1}(p-1)$ and residual polynomial
\[
R_{S_3}(y)=\frac{\binom{p^m}{p^{m-  b + 1}}}{\pi_\gp^{e_\gp( b - 1)}} \left(a^{p^{f-\mu}}\right)^{p^{m}- p^{m-  b + 1}} 
+  \frac{\binom{p^m}{p^{m-  b + 2}}}{\pi_\gp^{e_\gp( b - 2) }} \left(a^{p^{f-\mu}}\right)^{p^{m} - p^{m-  b + 2} }  \ \  y^{\gcd(e_\gp,p-1)} \text{ in } k_\gp[y].
\] 
Here $S_3$ corresponds to an ideal factor $\gI_{m-  b + 2}^{p^{m-  b +1} (p-1) / \gcd(e_\gp,p-1)}$ in $K\big(\hspace{-.5ex} \sqrt[p^m]{a}\big)$ where the splitting of $\gI_{m-  b +2}$ mirrors the splitting of $R_{S_3}(y)$ into irreducibles in $k_\gp(y)$.

All that remains before concluding with our desired factorization of $\gp$ is to consider the separability of the residual polynomials attached to each side. Our hypotheses ensure that all of the residual polynomials are radical polynomials of degree coprime to $p$. Thus, they are separable.
\end{proof}


For clarity and utility, we will state Theorem \ref{Thm: psplittingp^mgeneral} for the special case when $e_\gp=1$. 

\begin{corollary}\label{Cor: psplittingp^m1} Let $\gp$ be a prime of a number field $K$ above the odd prime $p$, and suppose $\gp$ is unramified over $p$. Take $x^{p^m}-a$ in $\Ocal_K[x]$ irreducible and having $v_{\gp}(a)=0$. Let $w$ be as in Definition \ref{Def: Wieferich} and write $ b=\min(w-1,m)$. Then we have the prime ideal factorization 
\[\LPO{\gp}{K}{p^m}{a} = \gP^{p^{m-b}}\prod_{i=m-b+1}^{m}\gP_i^{\varphi(p^i)}, \]
where the empty product when $w=1$ is taken to be $1$.
\end{corollary}


\begin{example}\label{Ex: x^27-80}
Consider $x^{27}-80$. For primes $\ell$ not dividing $3\cdot 80$, Dedekind--Kummer factorization tells us that we can obtain the factorization of $\ell$ in $\QQ\big(\hspace{-.5ex} \sqrt[27]{80}\big)$ by simply factoring $x^{27}-80$ modulo $\ell$. For example, $x^{27}-80$ is irreducible in $\FF_7[x]$, so $7$ remains prime in $\QQ\big(\hspace{-.5ex} \sqrt[27]{80}\big)$ and has residue class degree 27.

Corollary \ref{Cor: psplittingp^m1} allows us to factor $3$ in $\QQ\big(\hspace{-.5ex} \sqrt[27]{80}\big)$. We compute that $w=v_3\big(80^{27}-80\big)=4$. See Figure \ref{Fig: pdoesntdividea}. Thus, $3$ splits into four primes with residue class degree 1 in $\QQ\big(\hspace{-.5ex} \sqrt[27]{80}\big)$. More precisely,
\[\LPO{3}{\QQ}{27}{80} =\gp\gp_1^2\gp_2^{6}\gp_3^{18}.\]
Hence $3$ is a common index divisor as there are only three linear polynomials in $\FF_3[x]$. This is of course generalized and streamlined by Corollary \ref{Cor: CIDs} case \ref{CorCIDsii}. There we simply note that $4>3$. As the degree is relatively small, one can readily confirm the factorization of $3\Ocal_{\QQ(\hspace{-.5ex} \sqrt[27]{80})}$ with SageMath.
\end{example}
 
\begin{example}\label{Ex: 729}
For a bit more novelty, we can consider $\QQ\big(\hspace{-.5ex} \sqrt[729]{2186}\big)$. SageMath is much less agreeable when asked to factor $3$ in this number field. However, we can compute that $w=v_3\big(2186^{729}-2186\big)=7$, and Corollary \ref{Cor: psplittingp^m1} ($4\cdot 1>3$) tells us
\[\LPO{3}{\QQ}{729}{2186} =\gp\gp_1^2\gp_2^{6}\gp_3^{18}\gp_4^{54}\gp_5^{243-81}\gp_6^{729-243}.\] 
Using the logic above, or appealing to Corollary \ref{Cor: CIDs} case \ref{CorCIDsii} (Indeed, $7>3$), the integral prime 3 is a common index divisor of $\QQ\big(\hspace{-.5ex} \sqrt[729]{2186}\big)$. In fact, a result of Pleasants \cite{Pleasants} shows that $\Ocal_{\QQ(\hspace{-.5ex} \sqrt[729]{2186})}$ requires three ring generators over $\ZZ$ because of the splitting of 3. Applications to the minimal number of ring generators are explored in a forthcoming work by the author and a master's student. 
\end{example}


\section{The factorization of primes dividing $a$ but not $\gcd\big(n,v_\gp(a)\big)$}\label{Sec: Primesdividingabutnotgcd}

We are ready to turn our attention to general radical extensions. As noted, we are most interested in describing prime splitting in an arbitrary radical extension of $\QQ$. However, we will aim for our intermediate results to be as general as possible. This section describes the most straightforward case. Notice that we make no assumptions on the residue characteristic in this section. In particular, the theorem below includes primes of even residue characteristic. 

\begin{theorem}\label{Thm: pmidanotn}
Suppose $x^n-a$ is an irreducible polynomial in $\Ocal_K[x]$, where $K$ is an arbitrary number field. Let $\gp$ be a prime of $\Ocal_K$ such that $\gp \mid a\Ocal_K$ but $p\nmid \gcd\big(v_\gp(a),n\big)$, where $p$ is the characteristic of $k_\gp$ the residue field of $\gp$.  

Letting $\pi_\gp$ be a uniformizer at $\gp$, if we have a factorization into irreducibles in $k_\gp[y]$:
\[y^{\gcd(v_\gp(a),n)}-\frac{a}{\pi_\gp^{v_\gp(a)}}=\gamma_1(y)\cdots\gamma_r(y),\]  
then we have the prime ideal factorization
\[\LPO{\gp}{K}{n}{a} = \gP_1^{n /\gcd(v_\gp(a),n)}\cdots \gP_r^{n / \gcd(v_\gp(a),n)},\]
where the degree of $\Ocal_{K(\hspace{-.5ex} \sqrt[n]{a})}/\gP_i$ over $k_\gp$ is equal to the degree of $\gamma_i(y)$. 
\end{theorem}

\begin{proof}
Reducing $x^n-a$ modulo $\gp$, we have $x^n$, so we take the principal $x$-polygon. An example of the shape of this polygon is shown Figure \ref{Fig: xpolygeneraldegree}. The single side $S$ of this polygon has slope $-\frac{v_\gp(a)}{n}$. Write this in lowest terms as $-\frac{h}{e}$ and notice $e=\frac{n}{\gcd(v_\gp(a),n)}$. 
We find the residual polynomial associated to the single side of the polygon is \[R_S(y)=y^{\gcd(v_\gp(a),n)}-a/\pi_\gp^{v_\gp(a)}.\] Since $p\dnd \gcd\big(v_\gp(a),n\big)$, the roots of unity of order $\gcd\big(v_\gp(a),n\big)$ are distinct in $\ol{\FF_p}$. Thus, $R_S(y)$ is separable in $k_{\gp,x}[y]=k_\gp[y]$, and Theorem \ref{Thm: Ore} yields the stated factorization.
\end{proof}


\begin{figure}[h!]
\begin{tikzpicture}

      \draw[<->] (0,2.5) -- (0,0) -- (10,0);
      \draw [fill] (0,2) circle [radius = .05];

      \node [right] at (-1.3,2.05) {$v_p(a)$};
      
      
	  \draw [fill] (5,1) circle [radius = .05];
      
      \draw [fill] (10,0) circle [radius = .05];
      \node [above] at (10,-.6) {$n$};
      \node [above] at (5.9,1.1) {\text{Slope } $= \ -\frac{v_p(a)}{n}$}; 
      \foreach \x in  {1,2,3,4,5,6,7,8,9,10} {
      		\draw (\x, 2pt) -- +(0,-4pt);
      }
      \foreach \y in  {1,2} {
      		\draw (2pt, \y) -- +(-4pt, 0);
      }
      
      \draw (0,2) -- (10,0);
	\end{tikzpicture}
  \caption{An example $x$-polygon for $p\mid a$. To be explicit here, one could take $p=5$, $n=10$, and $a=75$. The residual polynomial is $x^2-3$, so in $\QQ\big(\hspace{-.5ex} \sqrt[10]{75}\big)$ one has $(5)=\gp^5$ where $\gp$ has residue class degree 2. }
  \label{Fig: xpolygeneraldegree}
\end{figure}


In particular, if we ignore the ramification indices, the splitting of a rational prime $p$ dividing $a$ but not $n$ mirrors the splitting of the separable polynomial $y^{\gcd(v_p(a),n)}-a/p^{v_p(a)}$ in $\FF_p[y]$. Notice also that Theorem \ref{Thm: pmidanotn} holds for all primes not just primes of odd residue characteristic. Hence, we have the corollary stated in the introduction:

\vspace{.5 cm}

\noindent\textbf{Corollary \ref{Cor: quickCID}.} \textit{If the integral prime $p$ is a common index divisor of $\QQ\big(\hspace{-.5ex} \sqrt[n]{a}\big)$, then $p\mid n$.}\\


\section{The factorization of primes dividing $n$ but not $a$}\label{Section: Primesdividingnnota}


After the previous case, the next most straightforward situation is that of primes dividing $n$ but not $a$. Theorem \ref{Thm: psplittingp^mgeneral} will be key for our work here, so our hypotheses will mirror those. To be explicit, suppose $K$ is a number field, $x^n-a\in \Ocal_K[x]$ is irreducible, and $\gp$ is a prime of $K$ above the odd prime $p$ such that $p\mid n$ and $v_{\gp}(a)=0$. Suppose the ramification index $e_\gp$ of $\gp$ over $p$ is not divisible by $p$. Write $n=p^m n_0$ with $\gcd(p,n_0)=1$. 
Recall that $a^{p^{f-\mu}}$ is the explicit $p^m$-th root of $a$ in $k_\gp$ constructed in Section \ref{Section: PrimePowerExtensions} and $w = v_\gp\big((a^{p^{f-\mu}})^{p^m}-a\big)$. We define $l=\lceil\frac{w}{e_\gp}-\frac{p}{p-1}\rceil$. If $l \leq 0$, then suppose $p\dnd w$, and if $l < m$, then suppose $p \dnd (e_\gp l-w)$. We will prove the following.

\begin{proposition}\label{Prop: pdividingnandnotageneral}
With the hypotheses as above, take a factorization of $x^{n_0}-a$ into irreducibles in $k_\gp[x]$:
\begin{equation}\label{Eq: pfactors}
x^{n_0}-a = \phi_1(x)\cdots \phi_r(x). 
\end{equation}


If we define $ b=\min(l,m)$, then we have the ideal factorization 
\[\LPO{\gp}{K}{n}{a} = \prod_{i=1}^r\left( \gP_i^{p^{m-  b} }  \prod_{j=m-  b +1}^{m} \gI_{i,j}^{\varphi(p^{j})/\gcd(e_\gp,p-1)} \right),  \]
where if $l\leq 0$ the empty product is taken to be 1. The degree of $\Ocal_{K(\hspace{-.5ex} \sqrt[n]{a})}/\gP_i$ over $k_\gp$ is equal to the degree of $\phi_i(x)$, and the prime ideal factorization of the ideal
$\gI_{i,j}\subset \Ocal_{K(\hspace{-.5ex} \sqrt[n]{a})}$ mirrors the factorization of 
\begin{equation}\label{Eq: ResPolySec7}
\frac{\binom{p^m}{p^{j-1}}}{\pi_\gp^{e_\gp(m-j+1)}}\left(a^{p^{f-\mu}}\right)^{p^m-p^{j-1}} + \frac{\binom{p^m}{p^{j}}}{\pi_\gp^{e_\gp(m-j)}}\left(a^{p^{f-\mu}}\right)^{p^m-p^{j}} \ y^{\gcd(e_\gp,p-1)} \in k_{\gp}[x]\big/\big(\phi_i(x)\big)[y], 
\end{equation}
with a degree $d$ irreducible factor of the residual polynomial in \eqref{Eq: ResPolySec7} corresponding to a prime ideal factor of $\gp \Ocal_{K(\hspace{-.5ex} \sqrt[n]{a})}$ having a residue field of degree $d\cdot \deg\phi_i(x)$ over $k_\gp$.
\end{proposition}


\begin{proof}
Our strategy will be to factor $\gp$ in $\Ocal_{K(\hspace{-.5ex} \sqrt[{n_0} \ ]{a})}$ first and then apply Theorem \ref{Thm: psplittingp^mgeneral} to $x^{p^m}-a$ over $K\big(\hspace{-.5ex} \sqrt[n_0]{a}\big)$.

Since $\gp\dnd n_0a \Ocal_{K}$, Dedekind--Kummer factorization (Theorem \ref{Thm: DedekindKummerFactorization}) shows that the prime ideal factorization of $\gp\Ocal_{K(\hspace{-.5ex} \sqrt[{n_0} \ ]{a})}$ mirrors the factorization of $x^{n_0}-a$ in \eqref{Eq: pfactors}.

Letting $\Pcal_i$ be the prime ideal factor of $\gp \Ocal_{K(\hspace{-.5ex} \sqrt[{n_0}\ ]{a})}$ corresponding to $\phi_i(x)$, we note the ramification index of $\Pcal_i$ over $p$ is $e_\gp$, and $a^{p^{f-\mu}}$ remains $p^m$-th root of $a$ in $k_{\gp}[x]/\big(\phi_i(x)\big)$, the residue field of $\Pcal_i$. Hence, $w=v_\gp\big((a^{p^{f-\mu}}\big)^{p^m}-a)=v_{\Pcal_i}\big((a^{p^{f-\mu}})^{p^m}-a\big)$. Thus, the hypotheses of Theorem \ref{Thm: psplittingp^mgeneral} still hold. Applying this theorem yields the desired result.
\end{proof}

As one might expect, if $\Pcal_i\neq \Pcal_j$ are two primes of $\Ocal_{K(\hspace{-.5ex} \sqrt[n_0 \ ]{a})}$ above $\gp$, then the residual polynomials describing the splitting of these primes in the ring of integers of $K\big(\hspace{-.5ex} \sqrt[n_0]{a},\sqrt[p^m]{a}\big)$ are the same. The difference between the residue fields $k_{\gp}[x]/\big(\phi_i(x)\big)$ and $k_{\gp}[x]/\big(\phi_j(x)\big)$ will account for any difference in the splitting of $\Pcal_i$ and $\Pcal_j$.


The following corollary gives a simpler statement when $K=\QQ$. Note that when $a\in \ZZ$, Lemma \ref{Lem: ExpDoesntMatter} shows $w=v_p\big(a^{p^m}-a\big)=v_p\big(a^{p}-a\big)$.

\begin{corollary}\label{Cor: psplittinginnQ} With the setup and notation as above but $K=\QQ$ and $\gp=p$, 
we factor 
\begin{equation*}
x^{n_0}-a = \phi_1(x)\cdots \phi_r(x) \ \text{ in } \ \FF_p[x], \ \text{ so } \ p\Ocal_{\QQ(\hspace{-.5ex} \sqrt[{n_0 \ }]{a})} = \gp_1\cdots\gp_r. 
\end{equation*} 
Writing $ b=\min(w-1,m)$, the prime ideal factorization of $p\Ocal_{\QQ(\hspace{-.5ex} \sqrt[n]{a})}$ is
\[\LPO{p}{\QQ}{n}{a} = \prod_{i=1}^{r}\left( \gP_{i}^{p^{m- b}}  \prod_{j=m- b+1}^{m}\gP_{i,j}^{\varphi(p^{j})}  \right),\]
where each $\gP_i$ or $\gP_{i,j}$ has residue class degree equal to the degree of $\phi_i(x)$.
\end{corollary}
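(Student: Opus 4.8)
The plan is to deduce Corollary~\ref{Cor: psplittinginnQ} directly from Theorem~\ref{Thm: pdividingnandnotageneral} by specializing to $K = \QQ$, $\gp = (p)$. First I would observe that when $K = \QQ$ the prime $\gp = (p)$ is unramified over $p$, so $e_\gp = 1$, and moreover the residue degree is $f = 1$. With $e_\gp = 1$ the quantities in Theorem~\ref{Thm: pdividingnandnotageneral} simplify: $l = \lceil w - \tfrac{p}{p-1}\rceil = w - 1$ since $1 < \tfrac{p}{p-1} \le 2$ for $p$ odd (one checks $w - 2 < w - \tfrac{p}{p-1} \le w - 1$, so the ceiling is exactly $w-1$), hence $b = \min(l,m) = \min(w-1,m)$, matching the statement. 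Likewise $\gcd(e_\gp, p-1) = 1$, so the exponents $\varphi(p^j)/\gcd(e_\gp,p-1)$ become $\varphi(p^j)$ and $p^{m-b}$ is as claimed. The hypotheses ``$p \nmid w$ when $l \le 0$'' and ``$p \nmid (e_\gp l - w)$ when $l < m$'' need checking: with $e_\gp = 1$ the second reads $p \nmid (l - w) = p \nmid (-1)$, which is automatic; the first case $l \le 0$ means $w = 1$ (since $w \ge 1$ always and $l = w-1$), and $p \nmid 1$ is automatic. So no extra hypotheses are actually imposed when $K = \QQ$ and $p$ is odd.

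Next I would handle the Wieferich difference. Since $f = 1$, we have $\mu \equiv m \bmod 1$ with $1 \le \mu \le 1$, so $\mu = 1$ and $f - \mu = 0$; thus the explicit root $a^{p^{f-\mu}} = a^{p^0} = a$, and the Wieferich difference of Definition~\ref{Def: Wieferich} is $a^{p^{f-\mu+m}} - a = a^{p^m} - a$, with $w = v_p(a^{p^m}-a)$. By Lemma~\ref{Lem: ExpDoesntMatter} this equals $v_p(a^p - a)$, which is the $w$ appearing in the statement of Theorem~\ref{Thm: Main}'s second bullet, so the notation is consistent (this is the point of the sentence preceding the corollary).

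Then I would transcribe the factorization. Dedekind--Kummer (Theorem~\ref{Thm: DedekindKummerFactorization}) applied over $\QQ$ with $x^{n_0} - a$ — legitimate since $p \nmid n_0 a$, so $p$ cannot divide the index $[\Ocal_{\QQ(\sqrt[n_0]{a})} : \ZZ[\sqrt[n_0]{a}]]$ (this divides a power of $\Disc(x^{n_0}-a) = \pm n_0^{n_0} a^{n_0-1}$) — gives $(p) = \gp_1 \cdots \gp_r$ in $\QQ(\sqrt[n_0]{a})$ mirroring the factorization $x^{n_0}-a = \phi_1(x)\cdots\phi_r(x)$ in $\FF_p[x]$, with each $\gp_i$ of residue degree $\deg \phi_i$. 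Feeding this into Theorem~\ref{Thm: pdividingnandnotageneral} (with $e_\gp = 1$) yields
\[(p) = \prod_{i=1}^{r}\left( \gP_i^{p^{m-b}} \prod_{j=m-b+1}^{m} \gP_{i,j}^{\varphi(p^j)} \right),\]
and since $k_\gp[x]/(\phi_i(x)) = \FF_{p^{\deg\phi_i}}$ is a finite field, the residual polynomials in \eqref{Eq: ResPolySec7} have degree $\gcd(e_\gp, p-1) = 1$, hence are linear and irreducible, so each side of each Newton polygon contributes a single prime $\gP_{i,j}$ of residue degree exactly $\deg\phi_i$ (no further splitting). The same holds for $\gP_i$. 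This gives the stated factorization.

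The main obstacle — really the only substantive point — is verifying that $l = w - 1$ and that the side-by-side hypotheses of Theorem~\ref{Thm: pdividingnandnotageneral} are vacuous in the $K = \QQ$, $e_\gp = 1$ setting; everything else is bookkeeping of how $f=1$, $\mu=1$, $e_\gp = 1$, and $\gcd(e_\gp,p-1)=1$ collapse the general formula. A secondary care point is making sure the two meanings of $w$ (via $a^{p^m}-a$ in Definition~\ref{Def: Wieferich} versus $a^p - a$ in the statement) are reconciled, which is exactly Lemma~\ref{Lem: ExpDoesntMatter}. I would write the proof in about a half page: one sentence invoking Dedekind--Kummer over $\QQ$, one short paragraph specializing Theorem~\ref{Thm: pdividingnandnotageneral} with the $e_\gp = 1$, $f = 1$ simplifications and the $l = w-1$ computation, and a closing sentence noting the residual polynomials are linear so the primes $\gP_i, \gP_{i,j}$ inherit residue degree $\deg\phi_i$.
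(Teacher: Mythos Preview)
Your proposal is correct and follows exactly the approach the paper intends: the corollary is stated immediately after Theorem~\ref{Thm: pdividingnandnotageneral} as a direct specialization to $K=\QQ$, $\gp=(p)$, with the paper noting only the use of Lemma~\ref{Lem: ExpDoesntMatter} to reconcile $w=v_p(a^{p^m}-a)=v_p(a^p-a)$. Your additional bookkeeping---computing $l=w-1$ from $e_\gp=1$, verifying that the auxiliary hypotheses on $p\nmid w$ and $p\nmid(e_\gp l-w)$ become vacuous, and observing that the residual polynomials collapse to degree $\gcd(e_\gp,p-1)=1$ so each $\gI_{i,j}$ is already prime with residue degree $\deg\phi_i$---makes explicit what the paper leaves to the reader.
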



\begin{example}\label{Ex: x^5*27-80}
Consider $x^{5\cdot 27}-80$ and the number field $\QQ(\hspace{-.5ex} \sqrt[135]{80})$. Factoring into irreducibles,  
\[x^5-80 = (x + 1)  (x^4 + 2x^3 + x^2 + 2x + 1) \ \text{ in } \ \FF_3[x].\]
Hence, building on our work in Example \ref{Ex: x^27-80}, we find
\[\LPO{3}{\QQ}{135}{80} =\gp\gp_1^2\gp_2^{6}\gp_3^{18}\pcal\pcal_1^2\pcal_2^{6}\pcal_3^{18},\] 
where each $\gp$ has residue class degree 1 and each $\pcal$ has residue class degree 4. SageMath confirms this. Since the $\gp$'s have residue class degree 1, the prime $3$ is a common index divisor. The four distinct degree 1 prime ideal factors cannot correspond to four distinct degree 1 polynomials in $\FF_3[x]$.
\end{example}

\begin{example}\label{Ex: 2727}
To more clearly see the computational benefits of Corollary \ref{Cor: psplittinginnQ}, we factor  
\[x^{101}-80 = (x + 1)  (x^{100}+ 2x^{99} + \cdots + 2x + 1) \ \text{ in } \ \FF_3[x].\]
We find that 
\[\LPO{3}{\QQ}{2727}{80} =\gp\gp_1^2\gp_2^{6}\gp_3^{18} \pcal \pcal_1^2\pcal_2^{6}\pcal_3^{18},\] 
where each $\gp$ has residue class degree 1 and each $\pcal$ has residue class degree 100.
\end{example}


\section{The factorization of primes $p$ dividing $a$ when $p$ divides $\gcd\big(v_p(a),n\big)$}\label{Sec: DifficultCase}

This section confronts the most difficult case from the perspective of Newton polygon methods: $p\mid a$ and $p\mid \gcd\big(v_p(a),n\big)$. Writing $n=p^m n_0$ with $m=v_p(n)$, the first subsection establishes the factorization of $p$ in $\QQ\big(\hspace{-.5ex} \sqrt[p^m]{a}\big)$. The second subsection describes the factorization of the primes of $\QQ\big(\hspace{-.5ex} \sqrt[p^m]{a}\big)$ above $p$ in $\QQ\big(\hspace{-.5ex} \sqrt[p^m]{a},\sqrt[n_0]{a}\big)=\QQ\big(\hspace{-.5ex} \sqrt[n]{a}\big)$. 

In order to make this description explicit, it is necessary to construct uniformizers. Recall, we write $a=a_0p^{hp^k}$ with $p$ not dividing $a_0$ or $h$. We also have $w_0=v_p\big(a_0^{p^m}-a_0\big)=v_p\big(a_0^p-a_0\big)$. We will show (Proposition \ref{Prop: DifficultCase}) that all the splitting at $p$ in $\QQ\big(\hspace{-.5ex} \sqrt[p^m]{a}\big)$ happens in $\QQ\big(\hspace{-.5ex} \sqrt[p^c]{a}\big)$ where $c=\min(w_0-1,k,m)$. We will see that it is sufficient to work in $\QQ\big(\hspace{-.5ex} \sqrt[p^c]{a}\big)$. In this field, the local extensions are simply $p$-power cyclotomic extensions, making the necessary uniformizers particularly simple. 
 
The following diagram gives a road map:


\begin{figure}[h!]
\xymatrix{
&&&\QQ\big(\hspace{-.5ex}\sqrt[n_0]{a},\sqrt[p^m]{a}\big)=\QQ\big(\hspace{-.5ex}\sqrt[n]{a}\big)\ar@{-}[d]^{\text{Prop. \ref{Prop: SplittingnotExplicit} + details for $K=\QQ$.}} & & \gI_0^{\frac{p^{m-c} n_0}{g_0}}\prod\limits_{i=1}^c\gI_i^{\frac{p^{m-c}\varphi(p^i) n_0 }{g_i}}\ar@{-}[d]\\
&&&\QQ\big(\hspace{-.5ex}\sqrt[p^m]{a}\big)\ar@{-}[d]^{\text{Prop. \ref{Prop: DifficultCase}.}} & & \gp_0^{p^{m-c}} \prod\limits_{i=1}^{c}\gp_i^{p^{m-c}\varphi(p^i)}\ar@{-}[d]\\
&&&\QQ && (p)\\
}
\caption{Describing the splitting when $p\mid a$ and $p\mid \gcd(v_p(a),n)$}
\label{Fig: SectionStrategy}
\end{figure}


\subsection{Irreduciblility}\label{Subsec: Irred}

Since it is not much more difficult, we employ the generality of Proposition \ref{Prop: pdividingnandnotageneral}: Let $K$ be a number field, $x^{p^m}-a\in\Ocal_K[x]$ an irreducible polynomial, and $\gp \subset \Ocal_K$ a prime with residue characteristic $p$. We analyze the reducibility of $x^{p^m}-a=x^{p^m}-a_0\pi^{hp^k}$, where $v_\gp(a_0)=0$ and $\pi$ is a uniformizer at $\gp$. We write $w_0$ for $v_\gp\big(a_0^{p^{f-\mu+m}}-a_0\big)$ as in Definition \ref{Def: Wieferich}. Suppose the ramification index $e_\gp$ of $\gp$ over $p$ is not divisible by $p$. We define $l_0=\lceil\frac{w_0}{e_\gp}-\frac{p}{p-1}\rceil$. If $l_0 \leq 0$, then suppose $p\dnd w_0$, and if $l_0 < m$, then suppose $p \dnd \big(e_\gp l_0-w_0\big)$. 

Additionally, assume $K_\gp\cap \QQ_p\big(\zeta_{p^\infty}\big)=\QQ_p$, since the presence of $p^{\text{th}}$ roots of unity leads to excess splitting and is cumbersome to analyze in generality. 
Note that, by Theorem \ref{Thm: psplittingp^mgeneral}, $a_0\in K_\gp^{p^j}$ with $j>0$ if and only if $l_0\geq j$. 
We see that $a_0\in K_\gp^{p^{l_0}}$ and $a_0\notin K_\gp^{p^{l_0+1}}$. Further, $\pi^{hp^k}\in K_\gp^{p^j}$ if and only if $k\geq j$. Indeed, if $l_0\leq 0$ and $k>0$ or if $l_0>0$ and $k=0$, then $a_0\pi^{hp^k}\notin K_\gp^p$ and $x^{p^m}-a_0\pi^{hp^k}$ is irreducible by Theorem \ref{Thm: VahlenCapelli}. Further, if $l_0\leq 0$ and $k=0$, then taking the $x$-adic development shows $a_0\pi^{hp^k}\notin K_\gp^p$. (See Theorem \ref{Thm: pmidanotn}.) Again, $x^{p^m}-a_0\pi^{hp^k}$ is irreducible. Hence we focus on the case where $l_0>0$ and $k>0$. With these assumptions, let $s=\min(l_0,k)$. We see $a_0\pi^{hp^k}\in K_\gp^{p^{s}}$ but $a_0\pi^{hp^k}\notin K_\gp^{p^{s+1}}$.

Recall, the twisted cyclotomic polynomial are
\[\Phi_{p^j}\left(x,\sqrt[p^s]{a}\right)=\prod_{\substack{{1\leq k< p^j} \\ {\gcd(k,p)=1}}} x-\zeta_{p^j}^k \sqrt[p^{s}]{a}.\]
Using similar tactics to 
\cite{VelezPrimePower}, we will prove the following.

\begin{proposition}\label{Prop: Factorization}
With the notation as above and writing $c=\min(m,s)$,
\begin{equation}\label{Eq: Factorization}
x^{p^m}-a=\left(x^{p^{m-c}}- \sqrt[p^{c}]{a}\right) \prod_{i=1}^{c} \Phi_{p^i}\left(x^{p^{m-c}},\sqrt[p^{c}]{a}\right)
\end{equation}
is a factorization of $x^{p^m}-a$ into irreducibles in $K_\gp[x]$. If $c=0$, then we take the empty product to be 1. Moreover, each factor corresponds to a totally ramified extension of $K_\gp$.
\end{proposition}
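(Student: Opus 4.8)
The plan is to verify \eqref{Eq: Factorization} first as a polynomial identity over $K_\gp$, and then separately argue that each of the displayed factors is irreducible in $K_\gp[x]$. For the identity, I would substitute $X = x^{p^{m-c}}$ and note that the right-hand side becomes $\big(X - \sqrt[p^{c}]{a}\,\big)\prod_{i=1}^c \Phi_{p^i}\big(X, \sqrt[p^c]{a}\,\big)$; by the grouping of $p^c$-th roots according to their exact order, this product telescopes to $X^{p^c} - \big(\sqrt[p^c]{a}\,\big)^{p^c} = X^{p^c} - a$, which is exactly the factorization recalled for the case $\sqrt[p^s]{a}\in\ZZ_p$ in Remark \ref{Rmk: AlmostCyclotomicPolys}, applied with $s$ replaced by $c$. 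Re-substituting $X = x^{p^{m-c}}$ gives $x^{p^m} - a$, as desired. This step requires only that $\sqrt[p^c]{a}$ genuinely lies in $K_\gp$, which is where we use $c = \min(m,s)\le s$ together with the definition of $s$ as the largest integer with $a\in K_\gp^{p^s}$; one also needs $K_\gp$ to contain no nontrivial $p$-power roots of unity beyond what is forced, so that the $\Phi_{p^i}$ actually have coefficients in $K_\gp$ — this is guaranteed by the standing hypothesis $K_\gp\cap\QQ_p(\zeta_{p^\infty}) = \QQ_p$.

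The substantive part is irreducibility of the factors. For the first factor $x^{p^{m-c}} - \sqrt[p^c]{a}$, I would invoke Theorem \ref{Thm: LangIrred} over $K_\gp$: I need $\sqrt[p^c]{a}\notin K_\gp^{p}$ (and, when $p=2$ and $4\mid p^{m-c}$, the extra $-4k^4$ condition, which is vacuous for odd $p$ and needs a separate remark only if $p=2$ is in scope — here $p$ is the residue characteristic and the theorem is stated for odd $p$ in the surrounding sections, so I would note this). The condition $\sqrt[p^c]{a}\notin K_\gp^p$ follows from maximality of $s$: if $\sqrt[p^c]{a}$ were a $p$-th power then $a = \big(\sqrt[p^c]{a}\,\big)^{p^c}$ would be a $p^{c+1}$-th power, and since $c = \min(m,s)$, either $c = s$ (contradicting $a\notin K_\gp^{p^{s+1}}$) or $c = m < s$, in which case $x^{p^m} - a$ would already be reducible in $K[x]$ via $a\in K^{p}$... here I must be slightly careful: reducibility over $K_\gp$ versus over $K$. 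Since we are only claiming a factorization over $K_\gp$, when $c = m$ the first factor is simply $x - \sqrt[p^m]{a}$, linear, hence trivially irreducible, and the claim is consistent. So the only case needing the maximality argument is $c = s \le m$, where it applies cleanly.

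For the twisted cyclotomic factors $\Phi_{p^i}\big(x^{p^{m-c}}, \sqrt[p^c]{a}\,\big)$ with $1\le i\le c$, I would follow the strategy alluded to in Remark \ref{Rmk: AlmostCyclotomicPolys}, which is V\'elez's: the splitting field of $\Phi_{p^i}\big(x, \sqrt[p^c]{a}\,\big)$ over $K_\gp$ is $K_\gp\big(\zeta_{p^i}, \sqrt[p^c]{a}\,\big)$, and a degree count using $[K_\gp(\zeta_{p^i}):K_\gp] = \varphi(p^i)$ (again using $K_\gp\cap\QQ_p(\zeta_{p^\infty}) = \QQ_p$) together with the fact that $\sqrt[p^c]{a}$ generates a degree-$p^c$ extension shows that $\Phi_{p^i}\big(x,\sqrt[p^c]{a}\,\big)$, which has degree $\varphi(p^i)$, is the minimal polynomial of $\zeta_{p^i}\sqrt[p^c]{a}$ over $K_\gp$, hence irreducible; then replacing $x$ by $x^{p^{m-c}}$ preserves irreducibility because adjoining a $p^{m-c}$-th root of $\zeta_{p^i}\sqrt[p^c]{a}$ multiplies the ramification (the extension is totally ramified of the expected degree since $\sqrt[p^c]{a}$ has valuation not divisible by $p$, coming from the $\pi^{hp^k}$ part). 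I expect \emph{this last point — irreducibility of $\Phi_{p^i}(x^{p^{m-c}}, \cdot)$, i.e. controlling the degree after the $p^{m-c}$-substitution — to be the main obstacle}, since it requires knowing the exact degree of $\sqrt[p^m]{a}$ over $K_\gp\big(\zeta_{p^i}, \sqrt[p^c]{a}\,\big)$; I would handle it by a valuation/ramification argument showing $v_\gp\big(\sqrt[p^c]{a}\,\big) = hp^{k}/p^{?}$ is prime-to-$p$ in the appropriate sense so that Theorem \ref{Thm: LangIrred} (or a direct Eisenstein-type argument after scaling) applies to $x^{p^{m-c}} - \zeta_{p^i}\sqrt[p^c]{a}$ over that field, and finally multiply degrees: $p^{m-c}\cdot\varphi(p^i)$ summed appropriately against $p^{m-c} + \sum_{i=1}^c p^{m-c}\varphi(p^i) = p^{m-c}\cdot p^c = p^m$, confirming the factorization is complete and forcing each factor to be irreducible.
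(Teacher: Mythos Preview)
Your overall architecture matches the paper's: verify the identity, handle the first factor via Theorem~\ref{Thm: LangIrred}, then reduce irreducibility of $\Phi_{p^i}\big(x^{p^{m-c}},\sqrt[p^{c}]{a}\big)$ to irreducibility of $x^{p^{m-c}}-\zeta_{p^i}\sqrt[p^{c}]{a}$ over $K_\gp(\zeta_{p^i})$ using $[K_\gp(\zeta_{p^i}):K_\gp]=\varphi(p^i)$. (A small slip: you write that $\sqrt[p^{c}]{a}$ ``generates a degree-$p^c$ extension,'' but in fact $\sqrt[p^c]{a}\in K_\gp$ since $c\le s$; fortunately your degree count only needs $[K_\gp(\zeta_{p^i}):K_\gp]=\varphi(p^i)$.)

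The genuine gap is the step you yourself flag as the main obstacle. Your plan is a valuation/Eisenstein argument: you want $v_\gp\big(\sqrt[p^{c}]{a}\big)$ to be prime to $p$ so that $x^{p^{m-c}}-\zeta_{p^i}\sqrt[p^{c}]{a}$ is visibly irreducible over $K_\gp(\zeta_{p^i})$. But $v_\gp\big(\sqrt[p^{c}]{a}\big)=hp^{\,k-c}$, and when $c=s=l_0<k$ this is divisible by $p$. In that regime no Newton-polygon or Eisenstein-type argument applies, and the extension $K_\gp(\zeta_{p^i})\big(\!\sqrt[p^{m-c}]{\zeta_{p^i}\sqrt[p^c]{a}}\,\big)/K_\gp(\zeta_{p^i})$ is not forced to be totally ramified by valuations alone. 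The closing degree-sum $p^{m-c}+\sum_{i=1}^{c}p^{m-c}\varphi(p^i)=p^m$ is just the polynomial identity restated; it cannot by itself force irreducibility of the individual factors.

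The paper fills exactly this gap with a norm argument rather than a valuation argument. Assuming $\zeta_{p^i}\sqrt[p^{s}]{a}=\beta^p$ in $K_\gp(\zeta_{p^i})$, one takes $\N_{K_\gp(\zeta_{p^i})/K_\gp}$ when $i=1$ to get $(\sqrt[p^s]{a})^{p-1}=\N(\beta)^p$, whence $\sqrt[p^s]{a}\in K_\gp^{p}$, contradicting the definition of $s$; for $i>1$ one takes $\N_{K_\gp(\zeta_{p^i})/K_\gp(\zeta_p)}$ to obtain $\zeta_p\in K_\gp(\zeta_p)^p$, i.e.\ $\zeta_{p^2}\in K_\gp(\zeta_p)$, contradicting $K_\gp\cap\QQ_p(\zeta_{p^\infty})=\QQ_p$. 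This bypasses valuations entirely and works uniformly in both the $c=k$ and $c=l_0<k$ cases.
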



To prove Proposition \ref{Prop: Factorization}, we will employ the following lemma. (This is essentially Lemma 4 of \cite{VelezPrimePower}.)

\begin{lemma}\label{Lem: TotalRam}
If $b\notin K_\gp^p$, then $K_\gp\big(\zeta_{p^i},\sqrt[p^j]{b}\big)$ is a totally ramified of degree $\phi(p^i)p^j$.
\end{lemma}


\begin{proof}[Proof of Lemma \ref{Lem: TotalRam}.]
We first claim that $\sqrt[p]{b}$ is not in any abelian extension of $K_\gp$. For a contradiction, suppose otherwise. This implies $K_\gp\big(\hspace{-.5ex} \sqrt[p]{b}\big)/K_\gp$ is abelian. Thus $\zeta_p\in K_\gp\big(\hspace{-.5ex} \sqrt[p]{b}\big)$, since $x^p-b$ splits. However, $\big[K_\gp\big(\zeta_p\big):K_\gp\big]\neq 1$ is a divisor of $p-1$ and $\big[K_\gp\big(\hspace{-.5ex} \sqrt[p]{b}\big):K_\gp\big]=p$. 
With this contradiction, we have our claim. 

Since $K_\gp\cap \QQ_p\big(\zeta_p^\infty\big)=\QQ_p$, we see that $K_\gp\big(\zeta_{p^i}\big)$ is totally ramified of degree $\phi(p^i)$. Since $K_\gp\big(\zeta_{p^i}\big)$ is abelian, $\sqrt[p]{b}\notin K_\gp\big(\zeta_{p^i}\big)$. Theorem \ref{Thm: VahlenCapelli} shows that $x^{p^j}-b$ is irreducible over $K_\gp\big(\zeta_{p^i}\big)$, so $\big[K_\gp\big(\zeta_{p^i},\sqrt[p^j]{b}\big):K_\gp\big]=\phi(p^i)p^j$. If $K_\gp\big(\zeta_{p^i},\sqrt[p^j]{b}\big)/K_\gp\big(\zeta_{p^i}\big)$ were not totally ramified, then we have an unramified extension $K_\gp\big(\zeta_{p^i}\big)\subset L \subset K_\gp\big(\zeta_{p^i},\sqrt[p^j]{b}\big)$ with $\big[L:K_\gp\big(\zeta_{p^i}\big)\big]=p$. Since unramified extensions of $p$-adic fields are given by adjoining roots of unity, we see $L/K_\gp$ is abelian. However, by degree considerations $x^{p^j}-b$ is reducible over $L$, so $\sqrt[p]{b}\in L$. This is a contradiction, and we find that $K_\gp\big(\zeta_{p^i},\sqrt[p^j]{b}\big)$ is totally ramified of degree $p^j$ over $K_\gp\big(\zeta_{p^i}\big)$. 
\end{proof}


\begin{proof}[Proof of Proposition \ref{Prop: Factorization}.] 
Indeed, the fact that Equation \eqref{Eq: Factorization} is a factorization is clear since $a\in K_\gp^{p^s}$. If $c=m$, then the result is clear from the hypothesis that $K_\gp~\cap~\QQ_p(\zeta_{p^\infty})~=~\QQ_p$. Hence, we assume $c=s$. 
Note that since $a=a_0\pi^{hp^k}\notin K_\gp^{p^{s+1}}$, we have $\sqrt[p^s]{a}~=~\sqrt[p^s]{a_0}\pi^{hp^{k-s}}~\notin~K_\gp^p$. Hence, Lemma \ref{Lem: TotalRam} yields the result.
\end{proof}

The correspondence between the factorization of $x^{p^m}-a$ in $K_\gp[x]$ the factorization of $\gp$ in $K\big( \sqrt[p^m]{a}\big)$ along with Proposition \ref{Prop: Factorization} yields the following result.


\begin{proposition}\label{Prop: DifficultCase}
With the setup established above, recall $a=a_0\pi^{hp^k}$, $w_0=v_\gp\big(a_0^{p^{f-\mu+m}}-a_0\big)$, $l_0=\lceil\frac{w_0}{e_\gp}-\frac{p}{p-1}\rceil$, $s=\min(l_0,k)$, and $c=\min(m,s)$. A prime $\gp$ of $\Ocal_K$ with odd residue characteristic factors\footnote{In contrast to Theorem \ref{Thm: psplittingp^mgeneral}, our labeling of ideals here agrees with the exponent of $p$ in the corresponding $p^i$-th twisted cyclotomic polynomial in \eqref{Eq: Factorization}.} as
\[\LPO{\gp}{K}{p^m}{a} = \gP^{p^{m-c}} \prod_{i=1}^{c}\gP_i^{p^{m-c}\varphi(p^i)},\]  
where $\varphi$ is Euler's phi function, and if $c\leq 0$, then we take the product to be 1. 
\end{proposition}


\begin{example}\label{Ex: 5^5*26}
Consider $f(x)=x^{25}-5^{5}\cdot 26$. We want to employ Proposition \ref{Prop: DifficultCase} to find how $5$ factors in $\QQ\big(\hspace{-.5ex} \sqrt[25]{81250}\big)$. We have $k=1$ and $w_0=v_5\big(26^5-26\big)=2$, so $l_0=1$ and $s=1$. As $m=2$, we have $c=1$. Thus, 
\[\LPO{5}{\QQ}{25}{81250} =\gP^{5^{2-1}} \prod_{i=1}^{1}\gP_i^{5\varphi(5^i)} =\gP^5\gP_1^{20}.\]
\end{example}


\begin{example}\label{Ex: p=3Knumberfield}
Let $K$ be any number field where the ramification index of each prime above 3 is relatively prime to 6. This ensures that $K_\gp\cap\QQ_3(\zeta_{3^\infty})=\QQ_3$ for any $\gp$ of $\Ocal_K$ above 3. Suppose $f(x)=x^{81}-82\cdot 3^9$ is irreducible in $K[x]$. Let $\gp\subset \Ocal_K$ be a prime above 3. For convenience, suppose $\Ocal_K/\gp\cong \FF_3$. We have $k=2$ and $w_0 = v_\gp(82^3-82)=3e_\gp$, so $l_0=2$. Thus $s=2$, and since $m=4$, we have $c=2$. Hence, the prime ideal factorization of $\gp$ in $K\big(\hspace{-.5ex} \sqrt[3^4]{82\cdot 3^9}\big)$ is
\[\gp \Ocal_{K\big(\hspace{-.5ex}\sqrt[3^4]{82\cdot 3^9}\big)}=\gP^{3^{4-2}} \prod_{i=1}^{2}\gP_i^{3^{4-2}\varphi(3^i)}=\gP^9\gP_1^{18}\gP_2^{54}.\]
\end{example}


\subsection{Extensions after the $p$-power extension}


Keeping the same notation, we recall that we wish to describe the splitting of $\gp$ in $K(\hspace{-.5ex} \sqrt[n]{a})$ where $n=n_0p^m$ with $\gcd(n_0,p)=1$. Given Proposition \ref{Prop: DifficultCase}, it suffices to describe the splitting of $\gP$, a prime of $K\big(\hspace{-.5ex} \sqrt[p^m]{a}\big)$ above $\gp$, in $K\big(\hspace{-.5ex} \sqrt[n]{a}\big)=K\big(\hspace{-.5ex} \sqrt[p^m]{a},\sqrt[n_0]{a}\big)$. Thus, we apply the Montes algorithm (Theorem \ref{Thm: Ore}) to $x^{n_0}-a$ over $K\big(\hspace{-.5ex} \sqrt[p^m]{a}\big)$.

Let $\pi_\gP$ be a uniformizer at $\gP$ and let $k_\gP$ be the residue field. 
Reducing $x^{n_0}-a$ at $\gP$, we have $x^{n_0}$. The $x$-adic development is simply $x^{n_0}-a$. The principal $x$-polygon is one-sided with slope $-\frac{v_\gP(a)}{n_0}$. Let $g_\gP=\gcd\big(n_0,v_\gP(a)\big)$. 
We consider the factorization of the residual polynomial associated to the lone side $S$:
\[R_S(y) = y^{g_\gP}-\frac{a}{\pi_{\gP}^{v_\gP(a)}} \ \text{ in } \ k_{\gP}[y].\]
Since $\gcd(n_0,p)=1$, this polynomial is separable. Thus, after accounting for the ramification index $\frac{n_0}{g_\gP}$, the ideal $\gP\Ocal_{K(\hspace{-.5ex}\sqrt[n]{a})}$ splits in $K\big(\hspace{-.5ex} \sqrt[n]{a}\big)$ in the same manner as $R_S(y)$ does in $k_\gP[y]$.

We have shown
\begin{proposition}\label{Prop: SplittingnotExplicit}
With the setup and notation as above, the ideal generated by $\gP$ factors as 
\[\LPO{\gP}{K}{n}{a}= \gI^{\frac{n_0}{g_\gP}},\]
where the prime ideal factorization of the ideal $\gI$ in $\Ocal_{K(\hspace{-.5ex} \sqrt[n]{a})}$ mirrors the factorization of the residual polynomial 
\[R_S(y) = y^{g_\gP}-\frac{a}{\pi_{\gP}^{v_\gP(a)}} \ \text{ in } \ k_\gP [y].\]
\end{proposition}


In conjunction with our work in previous sections, Proposition \ref{Prop: SplittingnotExplicit} completely describes the splitting of a prime $\gp$ of a number field $K$ in $K\big(\hspace{-.5ex} \sqrt[n]{a}\big)$ for a wide variety of $\gp$ and $K$. However, in the case where $K=\QQ$, we will improve the result by building explicit uniformizers to replace $\pi_\gP$ and further simplifying the residual polynomials. 

The first step is to work in a potentially smaller field. From Proposition \ref{Prop: Factorization}, we see that all the splitting at $p$ occurs in $\QQ\big(\hspace{-.5ex} \sqrt[p^c]{a}\big)$, and the extension $\QQ\big(\hspace{-.5ex} \sqrt[p^m]{a}\big)/\QQ\big(\hspace{-.5ex} \sqrt[p^c]{a}\big)$ is totally ramified of degree $p^{m-c}$ at the primes above $p$. Note $c=\min(w_0-1,k,m)$, since $l_0=w_0-1$ when $K=\QQ$. In $\QQ\big(\hspace{-.5ex} \sqrt[p^c]{a}\big)$, the splitting of $p$ mirrors the factorization of $x^{p^c}-a$ in $\QQ_p[x]$. Proposition \ref{Prop: Factorization} shows that we have the following factorization into irreducibles:
\begin{equation}\label{Eq: Factorizationc}
x^{p^c}-a=\left(x- \sqrt[p^{c}]{a}\right) \prod_{i=1}^{c} \Phi_{p^i}\left(x,\sqrt[p^{c}]{a}\right).
\end{equation}
As stated in Proposition \ref{Prop: DifficultCase}, the factorization of $x^{p^c}-a$ in $\QQ_p[x]$ in \eqref{Eq: Factorizationc} corresponds to a factorization of $p$ into primes in $\QQ\big(\hspace{-.5ex} \sqrt[p^c]{a}\big)$:
\[\LPO{p}{\QQ}{p^c}{a} =\gp_0 \prod_{i=1}^{c}\gp_i^{\varphi(p^i)}.\]
The completion of $\QQ\big(\hspace{-.5ex} \sqrt[p^c]{a}\big)$ at each $\gp$ is isomorphic to the extension of $\QQ_p$ obtained by adjoining a root of the corresponding irreducible factor of $x^{p^c}-a$. For example, $\QQ\big(\hspace{-.5ex} \sqrt[p^c]{a}\big)_{\gp_0}~\cong~\QQ_p\big(\hspace{-.5ex} \sqrt[p^c]{a}\big)$ and $\QQ\big(\hspace{-.5ex} \sqrt[p^c]{a}\big)_{\gp_1}\cong \QQ_p\big(\zeta_p\sqrt[p^c]{a}\big)$. However, Proposition \ref{Prop: Factorization} shows that $\sqrt[p^c]{a}\in \ZZ_p$. Thus, for our examples, $\QQ_p\big(\hspace{-.5ex} \sqrt[p^c]{a}\big)=\QQ_p$ and $\QQ_p\big(\zeta_p\sqrt[p^c]{a}\big)=\QQ_p(\zeta_p)$. In general, 
\begin{equation}\label{Eq: CycloCompletions}
\QQ\big(\hspace{-.5ex} \sqrt[p^c]{a}\big)_{\gp_i}\cong \QQ_p\big(\zeta_{p^i}\sqrt[p^c]{a}\big)=\QQ_p\big(\zeta_{p^i}\big).
\end{equation}
Therefore, a uniformizer for $\QQ\big(\hspace{-.5ex} \sqrt[p^c]{a}\big)_{\gp_0}$ is $p$, and fundamental results on cyclotomic fields show that a uniformizer for $\QQ\big(\hspace{-.5ex} \sqrt[p^c]{a}\big)_{\gp_i}$, with $i>0$, is $1-\zeta_{p^i}$.

Proposition \ref{Prop: SplittingnotExplicit} shows that the ramification index of any prime $\gP$ of $\QQ\big(\hspace{-.5ex} \sqrt[p^m]{a}\big)$ above $p$ in $\QQ\big(\hspace{-.5ex} \sqrt[n]{a}\big)$ is not divisible by $p$. Thus we can analyze the splitting of $p$ in $\QQ\big(\hspace{-.5ex} \sqrt[p^c]{a},\sqrt[n_0]{a}\big)$ and then multiply the ramification indices by a factor of $p^{m-c}$ to obtain the prime ideal decomposition of $p$ in $\QQ\big(\hspace{-.5ex} \sqrt[n]{a}\big)$. 

Summarizing the above discussion we obtain the following explicit description:

\vspace{.3 cm}

\noindent\textbf{Theorem \ref{Thm: Main} case \ref{MainIII}.} 
\textit{Suppose $x^n-a\in\ZZ[x]$ is irreducible. Let $p$ be an odd prime. Suppose $a=a_0p^{hp^k}$ and $n=n_0p^m$, where $\gcd(a_0,p)=\gcd(n_0,p)=1$ and $k,m>0$. Let $w_0=v_p\big(a_0^p-a_0\big)$, $c=\min(w_0-1,k,m)$, $g_0=\gcd(n_0,h)$, and $g=\gcd\big(n_0,h(p-1)\big)$. 
Then 
the ideal generated by $p$ experiences the following factorization
\[\LPO{p}{\QQ}{n}{a} = \gI_0^{\frac{p^{m-c} n_0}{g_0}}\prod_{i=1}^c\gI_i^{\frac{p^{m-c}\varphi(p^i) n_0 }{g}},\]
where the factorization of $\gI_0$ mirrors the factorization of 
\[R_{S_0}(y) = y^{g_0}-\frac{a}{p^{hp^k}} = y^{g_0}-a_0 \ \text{ in } \ \FF_p[y],\]
and the factorization of $\gI_i$ with $i>0$ mirrors the factorization of 
\[R_{S_i}(y) = y^{g}-\frac{a}{\left(1-\zeta_{p^i}\right)^{hp^k\varphi(p^i)}}=y^{g}-(-1)^{hp^k}a_0 \ \text{ in } \ \FF_p[y].\]}

\vspace{.3 cm}

Notice that $R_{S_i}(y)=R_{S_j}(y)$ for all $1<i,j\leq c$, so we denote this polynomial by $R_S(y)$.

\begin{proof}
The result is clear from Proposition \ref{Prop: SplittingnotExplicit} and the fact that the relevant completions in $\QQ\big(\hspace{-.5ex} \sqrt[p^c]{a}\big)$ are the $p$-power cyclotomic extensions of $\QQ_p$ described in \eqref{Eq: CycloCompletions}. However, the simplification 
$y^{g}-a/(1-\zeta_{p^i})^{hp^k\varphi(p^i)}=y^{g}-(-1)^{hp^k}a_0$ in $\FF_p[y]$ does take some argument. We have the unit \[\frac{p}{\left(1-\zeta_{p^i}\right)^{\varphi(p^i)}}=\prod_{\substack{{1\leq j< p^i} \\ {\gcd(j,p)=1}}}  \frac{1-\zeta_{p^i}^j}{1-\zeta_{p^i}}.\]
Long division with $x^j-1$ and $x-1$ yields 
\[\frac{1-\zeta_{p^i}^j}{1-\zeta_{p^i}}=\zeta_{p^i}^{j-1}+\zeta_{p^i}^{j-2}+\cdots+\zeta_{p^i}+1\equiv j\bmod \left( 1-\zeta_{p^i}\right).\]
Thus
\[\prod_{\substack{{1\leq j< p^i} \\ {\gcd(j,p)=1}}}  \frac{1-\zeta_{p^i}^j}{1-\zeta_{p^i}}\equiv \prod_{\substack{{1\leq j< p^i} \\ {\gcd(j,p)=1}}}  j \equiv \big((p-1)!\big)^{p^{i-1}}\equiv -1\bmod \left(1-\zeta_{p^i}\right). \eqno \qed\] 
\renewcommand{\qedsymbol}{}
\end{proof}


It is useful to have some examples in order to parse Theorem \ref{Thm: Main} case \ref{MainIII}. Though we could just apply the result directly, the following examples proceed in the spirit of the proof.

\begin{example}\label{Ex: ppower1}
We will look at $p=3$ and $n=3^4\cdot 2$, but we will vary the values of $a$. Take $a=3^3\cdot 5$. We see that $k=3$, $w_0=v_3(5^3-5)=1$, and $m=4$. Thus $c=\min(w_0-1, k,m)=0$. Beginning in $\QQ\big(\hspace{-.5ex} \sqrt[81]{135}\big)$, Proposition \ref{Prop: DifficultCase} shows $3\Ocal_{\QQ(\hspace{-.5ex} \sqrt[81]{135})}=\gp_0^{81}$. 
Moving to $\QQ\big(\hspace{-.5ex} \sqrt[162]{135}\big)$, we note $g_0=\gcd(n_0,h)=\gcd(2,1)=1$ so Theorem \ref{Thm: Main} case \ref{MainIII} shows $3 \Ocal_{\QQ(\hspace{-.5ex} \sqrt[162]{135})}=\gP^{162}$. 


We can take $a=3^6\cdot 5 = 3645$. We still have $k=1$, $w_0=1$, $m=4$, and $c=0$. Hence, with  $\QQ\big(\hspace{-.5ex} \sqrt[81]{3645}\big)$, we have $3\Ocal_{\QQ(\hspace{-.5ex} \sqrt[81]{3645})}=\gp_0^{81}$. However, $g_0=\gcd(n_0,h)=\gcd(2,2)=2$. Thus, Theorem \ref{Thm: Main} case \ref{MainIII} shows we must consider
\[R_{S_0}(y)=y^2-5 = y^2+1 \in \FF_3[y].\]
This polynomial is irreducible, so in the ring of integers of $\QQ\big(\hspace{-.5ex} \sqrt[162]{3645}\big)$ we have $3\Ocal_{\QQ(\hspace{-.5ex} \sqrt[162]{3645})}~=~\gP^{81}$ with $\gP$ having residue class degree 2. 

We change $a$ to $3^6\cdot 10=7290$. Still $k=1$ and $m=4$, but now $w_0=v_3(1000-10)=2$. Hence $c=1$, and Proposition \ref{Prop: DifficultCase} shows at $\QQ\big(\hspace{-.5ex} \sqrt[81]{7290}\big)$ we have $3 \Ocal_{\QQ(\hspace{-.5ex} \sqrt[81]{7290})}=\gp_0^{27}\gp_1^{54}$. Continuing, $g_0=\gcd(n_0,h)=\gcd(2,2)=2$ and $g=\gcd\big(n_0,h(p-1)\big)=\gcd(2,4)=2$, so
\[R_{S_0}(y)=y^2-10 = (y+1)(y-1) \in \FF_3[y] \ \text{ for } \ \gp_0,\]
and
\[R_S(y)=y^2-(-1)^6 10 = (y+1)(y-1) \in \FF_3[y]\ \text{ for } \ \gp_1.\]
Therefore, Theorem \ref{Thm: Main} case \ref{MainIII} shows 
\[\LPO{3}{\QQ}{162}{7290} =\gP_{0,0}^{27}\gP_{0,1}^{27}\gP_{1,0}^{54}\gP_{1,1}^{54}.\] 
Notice each factor has degree 1, so 3 is a common index divisor. To see this we could have simply used Corollary \ref{Cor: CIDs} to compute $d_{1,0}+\min(w_0-1,k,m)\cdot d_1=2+1\cdot 2 >\Irr(1,3)=3.$
\end{example}

To see the how the residual polynomials can vary, consider the following example.

\begin{example}\label{Ex: a=3 to the 2*27 times 80} 
Let $n=4\cdot 3^3$ and $a=3^{2\cdot 27}\cdot 80$, so we are considering the splitting of $3$ in $\QQ\big(\hspace{-.5ex} \sqrt[108]{3^{2\cdot 27}\cdot 80}\big)$. We have $m=3$, $k=3$, and $w_0=v_p\big(80^3-80\big)=4$. Hence, $c~=~\min(m,k,w_0-1)~=~3$ and Proposition \ref{Prop: DifficultCase} yields $3 \Ocal_{\QQ(\hspace{-.5ex} \sqrt[27]{a})}=\gp_0\gp_1^2\gp_2^6\gp_3^{18}$ in the extension $\QQ\big(\hspace{-.5ex} \sqrt[27]{a}\big)$. We have $g_0=\gcd(4,2)=2$ and $g=\gcd(4,2\cdot 2)=4$. Hence,
\[R_{S_0}(y)=y^2-80 = y^2+1\in \FF_3[y] \ \text{ for } \ \gp_0,\]
and
\[R_S(y)=y^{4}-(-1)^{2\cdot 27} 80 = y^{4}+1=(y^2 + y -1)(y^2 -y -1) \in \FF_3[y]\ \text{ for } \ \gp_1,\gp_2,\gp_3.\]

Thus, Theorem \ref{Thm: Main} case \ref{MainIII} shows
\[3 \Ocal_{\QQ\big(\hspace{-.5ex}\sqrt[810]{3^{5\cdot 27}\cdot 26}\big)}= \gP_0^{2}\gP_{1,0}^{2}\gP_{1,1}^{2}\gP_{2,0}^{6}\gP_{2,1}^{6}\gP_{3,0}^{18}\gP_{3,1}^{18} ,\]
where each $\gP$ has residue class degree 2. As there are only three monic, irreducible polynomials of degree 2 in $\FF_3[x]$, we see that 3 is a common index divisor due to the prime ideal factors of residue class degree 2. With Corollary \ref{Cor: CIDs}, the inequality is $d_{2,0}+\min(w_0-1,k,m)\cdot d_2=1+3\cdot 2 >\Irr(2,3)=3.$
\end{example} 

We undertake another, more involved example with $p=3$.

\begin{example}\label{Ex: a=3 to the 5*27 times 26} 
Take $n=3^4\cdot 10$ and $a=3^{5\cdot 27}\cdot 26$, so we are considering the splitting of $3$ in $\QQ\big(\hspace{-.5ex} \sqrt[810]{3^{5\cdot 27}\cdot 26}\big)$. As before, $m=4$, but now $k=3$ and $w_0=v_p\big(26^3-26\big)=3$. Hence, $c=2$ and Proposition \ref{Prop: DifficultCase} yields $3 \Ocal_{\QQ(\hspace{-.5ex} \sqrt[81]{a})}=\gp_0^{9}\gp_1^{18}\gp_2^{54}$. 
We have $g_0=\gcd(10,5)=5$ and $g=\gcd(10,5\cdot 2)=10$. Hence,
\[R_{S_0}(y)=y^5-26 = y^5+1\in \FF_3[y] \ \text{ for } \ \gp_0,\]
and
\[R_S(y)=y^{10}-(-1)^5 26 = y^{10}-1=(y^5+1)(y^5-1) \in \FF_3[y]\ \text{ for } \ \gp_1,\gp_2.\]
Factoring into irreducibles, 
\[y^5-1 = (y - 1)(y^4 + y^3 + y^2 + y + 1)  \text{ and }  y^5+1=(y + 1)(y^4 -y^3 + y^2 -y + 1)  \text{ in } \FF_3[y].\]
Thus, Theorem \ref{Thm: Main} case \ref{MainIII} shows that in the ring of integers of $\QQ\big(\hspace{-.5ex} \sqrt[810]{3^{5\cdot 27}\cdot 26}\big)$
\[3 \Ocal_{\QQ\big(\hspace{-.5ex}\sqrt[810]{3^{5\cdot 27}\cdot 26}\big)} = \gP_{0,0}^{18}\gP_{0,1}^{18}\gP_{1,0}^{18}\gP_{1,1}^{18}\gP_{1,2}^{18}\gP_{1,3}^{18}\gP_{2,0}^{54}\gP_{2,1}^{54}\gP_{2,2}^{54}\gP_{2,3}^{54},\]
where the residue class degrees $f_{*,*}$ are as follows: 
\[f_{0,0} ,f_{1,0} ,f_{1,1} ,f_{2,0} ,f_{2,1} =1 \ \text{ and } \ f_{0,1} ,f_{1,2} ,f_{1,3} ,f_{2,2} ,f_{2,3} =4.\]

There are five degree 1 factors of $3 \Ocal_{\QQ(\hspace{-.5ex}\sqrt[810]{3^{5\cdot 27}\cdot 26})}$, so 3 is a common index divisor. There are 18 degree 4 monic, irreducibles in $\FF_3[x]$, so the degree 4 factors do not provide an obstruction. Using Corollary \ref{Cor: CIDs}, we have $d_{1,0}+\min(w_0-1,k,m)\cdot d_1=1+2\cdot 2   >\Irr(1,3)=3,$ but $d_{4,0}+\min(w_0-1,k,m)\cdot d_4=1+2\cdot 2 \not>\Irr(4,3)=18.$

\end{example} 


\bibliography{Bibliography}

\begin{thebibliography}{ABHS23}

\bibitem[ABHS23]{ABHS}
Sarah Arpin, Sebastian Bozlee, Leo Herr, and Hanson Smith.
\newblock The scheme of monogenic generators {II}: local monogenicity and twists.
\newblock {\em Research in Number Theory}, 9(43), 2023.

\bibitem[ABS25]{AlpogeBhargavaShnidman}
Levent Alp{\"o}ge, Manjul Bhargava, and Ari Shnidman.
\newblock A positive proportion of cubic fields are not monogenic yet have no local obstruction to being so.
\newblock {\em Mathematische Annalen}, 391(4):5535--5551, 2025.

\bibitem[ASW06]{MR2251710}
Saban Alaca, Blair~K. Spearman, and Kenneth~S. Williams.
\newblock Explicit decomposition of a rational prime in a cubic field.
\newblock {\em International Journal of Mathematics and Mathematical Sciences}, pages 1--11, 2006.

\bibitem[BD19]{MR3889325}
Stephen~C. Brown and Chad~T. Davis.
\newblock The factorization of 2 and 3 in cyclic quartic fields.
\newblock {\em Mathematical Journal of Okayama University}, 61:167--172, 2019.

\bibitem[Ber27]{Berwick1927}
W.E.H. Berwick.
\newblock {\em Integral Bases}.
\newblock Cambridge tracts in mathematics and mathematical physics. The University Press, 1927.

\bibitem[BL20]{BellemareLei}
Hugues Bellemare and Antonio Lei.
\newblock Explicit uniformizers for certain totally ramified extensions of the field of {$p$}-adic numbers.
\newblock {\em Abhandlungen aus dem Mathematischen Seminar der Universit\"{a}t Hamburg}, 90(1):73--83, 2020.

\bibitem[BYT24]{MR4736305}
Hamid Ben~Yakkou and Pagdame Tiebekabe.
\newblock On common index divisors and not monogenity of nonic number fields defined by trinomials of type {$x^9+ax+b$}.
\newblock {\em Bolet\'in de la Sociedad Matem\'atica Mexicana. Third Series}, 30(2):Paper No. 44, 25, 2024.

\bibitem[DCD05]{DelCorsoDvornicich}
Ilaria Del~Corso and Roberto Dvornicich.
\newblock On {O}re's conjecture and its developments.
\newblock {\em Transactions of the American Mathematical Society}, 357(9):3813--3829, 2005.

\bibitem[EFK23]{MR4563435}
Lhoussain El~Fadil and Omar Kchit.
\newblock On index divisors and monogenity of certain septic number fields defined by {$x^7+ax ^3+b$}.
\newblock {\em Communications in Algebra}, 51(6):2349--2363, 2023.

\bibitem[EG17]{EvertseGyoryBook}
Jan-Hendrik Evertse and K\'{a}lm\'{a}n Gy{\H{o}}ry.
\newblock {\em Discriminant equations in {D}iophantine number theory}, volume~32 of {\em New Mathematical Monographs}.
\newblock Cambridge University Press, Cambridge, 2017.

\bibitem[Eng30]{Engstrom}
H.~T. Engstrom.
\newblock On the common index divisors of an algebraic field.
\newblock {\em Transactions of the American Mathematical Society}, 32(2):223--237, 1930.

\bibitem[FMN12]{ElFadilMontesNart}
Lhoussain~El Fadil, Jesús Montes, and Enric Nart.
\newblock Newton polygons and {$p$}-integral bases of quartic number fields.
\newblock 11(04):1250073, 2012.

\bibitem[Ga{\'{a}}19]{GaalsBook}
Istv{\'{a}}n Ga{\'{a}}l.
\newblock {\em Diophantine equations and power integral bases}.
\newblock Birkh\"{a}user/Springer, Cham, 2019.

\bibitem[Gas17]{g17}
T.~Alden Gassert.
\newblock A note on the monogeneity of power maps.
\newblock {\em Albanian Journal of Mathematics}, 11(1):3--12, 2017.

\bibitem[GMN12]{GMN}
Jordi Gu\`ardia, Jes\'{u}s Montes, and Enric Nart.
\newblock Newton polygons of higher order in algebraic number theory.
\newblock {\em Transactions of the American Mathematical Society}, 364(1):361--416, 2012.

\bibitem[Hen94]{Hensel1894}
K.~Hensel.
\newblock Arithmetische {U}ntersuchungen \"{u}ber die gemeinsamen ausserwesentlichen {D}iscriminantentheiler einer {G}attung.
\newblock {\em Journal f\"{u}r die Reine und Angewandte Mathematik}, 113:128--160, 1894.

\bibitem[Jad09]{MR2531162}
Borka Jadrijevi\'c.
\newblock Establishing the minimal index in a parametric family of bicyclic biquadratic fields.
\newblock {\em Periodica Mathematica Hungarica. Journal of the J\'anos Bolyai Mathematical Society}, 58(2):155--180, 2009.

\bibitem[JK17]{JhorarKhanduja}
Bablesh Jhorar and Sudesh~K. Khanduja.
\newblock On the index theorem of {O}re.
\newblock {\em Manuscripta Mathematica}, 153(1-2):299--313, 2017.

\bibitem[JKK23]{MR4510645}
Anuj Jakhar, Sumandeep Kaur, and Surender Kumar.
\newblock Non-monogenity of certain octic number fields defined by trinomials.
\newblock {\em Colloquium Mathematicum}, 171(1):145--152, 2023.

\bibitem[JKS16]{JKS1}
Anuj Jakhar, Sudesh~K. Khanduja, and Neeraj Sangwan.
\newblock On prime divisors of the index of an algebraic integer.
\newblock {\em Journal of Number Theory}, 166:47--61, 2016.

\bibitem[Lan02]{Algebra}
Serge Lang.
\newblock {\em Algebra}.
\newblock Graduate Texts in Mathematics. Springer, New York, NY, 2002.

\bibitem[MV76]{MannVelez}
Henry~B. Mann and William~Yslas V{\'{e}}lez.
\newblock Prime ideal decomposition in {$F(\sqrt[m]{\mu})$}.
\newblock {\em Monatshefte f\"{u}r Mathematik}, 81(2):131--139, 1976.

\bibitem[Nak83]{MR731633}
Toru Nakahara.
\newblock On the indices and integral bases of noncyclic but abelian biquadratic fields.
\newblock {\em Archiv der Mathematik}, 41(6):504--508, 1983.

\bibitem[Nar85]{Nart}
Enric Nart.
\newblock On the index of a number field.
\newblock {\em Transactions of the American Mathematical Society}, 289(1):171--183, 1985.

\bibitem[Neu99]{Neukirch}
J\"{u}rgen Neukirch.
\newblock {\em Algebraic number theory}, volume 322 of {\em Grundlehren der Mathematischen Wissenschaften}.
\newblock Springer-Verlag, Berlin, 1999.

\bibitem[Obu14]{ObusWild}
Andrew Obus.
\newblock Conductors of wild extensions of local fields, especially in mixed characteristic {$(0,2)$}.
\newblock {\em Proceedings of the American Mathematical Society}, 142(5):1485--1495, 2014.

\bibitem[Ore28a]{Ore}
{{\O}}ystein Ore.
\newblock Newtonsche {P}olygone in der {T}heorie der algebraischen {K}\"{o}rper.
\newblock {\em Mathematische Annalen}, 99(1):84--117, 1928.

\bibitem[Ore28b]{OreReview}
{{\O}}ystein Ore.
\newblock {Review: W. E. H. Berwick, Integral Bases}.
\newblock {\em Bulletin of the American Mathematical Society}, 34(3):378 -- 379, 1928.

\bibitem[Ple74]{Pleasants}
P.~A.~B. Pleasants.
\newblock The number of generators of the integers of a number field.
\newblock {\em Mathematika}, 21(2):160–167, 1974.

\bibitem[PP12]{PethoPohst}
Attila Peth{\H o} and Michael~E. Pohst.
\newblock On the indices of multiquadratic number fields.
\newblock {\em Acta Arithmetica}, 153(4):393--414, 2012.

\bibitem[SWY07]{SpearmanWilliamsYang}
Blair~K. Spearman, Kenneth~S. Williams, and Qiduan Yang.
\newblock On the common index divisors of a dihedral field of prime degree.
\newblock {\em International Journal of Mathematics and Mathematical Sciences}, 2007.

\bibitem[SYY15]{SpearmanYangYoo}
Blair Spearman, Qiduan Yang, and Jeewon Yoo.
\newblock Minimal indices of pure cubic fields.
\newblock {\em Archiv der Mathematik}, 106, 08 2015.

\bibitem[Tur98]{Turnwald}
Gerhard Turnwald.
\newblock Reducibility of translates of dickson polynomials.
\newblock {\em Proceedings of the American Mathematical Society}, 126(4):965--971, 1998.

\bibitem[V{\'{e}}l77]{VelezCoprime}
William~Yslas V{\'{e}}lez.
\newblock Prime ideal decomposition in {$F(\mu ^{1/m})$}. {II}.
\newblock In {\em Number theory and algebra}, pages 331--338. Academic Press, New York, 1977.

\bibitem[V{\'{e}}l78]{Velez1/p}
William~Yslas V{\'{e}}lez.
\newblock Prime ideal decomposition in {$F(\mu ^{1/p})$}.
\newblock {\em Pacific Journal of Mathematics}, 75(2):589--600, 1978.

\bibitem[V{\'{e}}l88]{VelezPrimePower}
William~Yslas V{\'{e}}lez.
\newblock The factorization of {$p$} in {${\bf Q}(a^{1/p^k})$} and the genus field of {${\bf Q}(a^{1/n})$}.
\newblock {\em Tokyo Journal of Mathematics}, 11(1):1--19, 1988.

\bibitem[Viv04]{Viviani}
Filippo Viviani.
\newblock Ramification groups and {A}rtin conductors of radical extensions of {$\Bbb Q$}.
\newblock {\em Journal de Th\'{e}orie des Nombres de Bordeaux}, 16(3):779--816, 2004.

\bibitem[v{\.Z}13]{Zylinski}
E.~von {\.Z}yli{\'n}ski.
\newblock Zur {T}heorie der au{\ss}erwesentlichen {D}iskriminantenteiler algebraischer {K}{\"o}rper.
\newblock {\em Mathematische Annalen}, 73(2):273--274, 1913.

\bibitem[WY22]{WangYuan}
Shanwen Wang and Yijun Yuan.
\newblock Uniformizer of the false {T}ate curve extension of {$\Bbb Q_p$}.
\newblock {\em Ramanujan Journal}, 58(2):549--595, 2022.

\end{thebibliography}
\bibliographystyle{alpha}


\end{document}